\definecolor{brightorange}{RGB}{255, 153, 51}
\definecolor{vividpurple}{RGB}{106, 15, 142}
\definecolor{violet}{RGB}{183,182,231}
\definecolor{tan}{RGB}{210,180,140}
\newcounter{counter}
\numberwithin{counter}{section}
\newtheorem{theorem}[counter]{Theorem}
\newtheorem{proposition}[counter]{Proposition}
\newtheorem{lemma}[counter]{Lemma}
\newtheorem{corollary}[counter]{Corollary}
\newtheorem{definition}[counter]{Definition}
\numberwithin{equation}{section}
\pgfplotsset{width = 10cm, compat=1.9}
\title{A Graphical Approach to the Frobenius Number Problem in Three Variables}
\author{Xinxin Fang\\ \small The Episcopal Academy}
\begin{document}
\begin{titlepage}
    \centering
    \vspace*{2cm} 
    {\Huge\bfseries A Graphical Approach to the Frobenius Number Problem in Three Variables\par}
    \vspace{2cm}
    {\Large Xinxin Fang\par}
    \vspace{2cm}
\end{titlepage}
\onehalfspacing
\pagestyle{empty}
\begin{abstract}
The Frobenius number for a set of relatively prime positive integers, where the smallest integer in the set is at least 2, is the largest integer that cannot be expressed as a nonnegative linear combination of those integers. We analyze the Frobenius number for three variables by analyzing the lattice points associated with the line $ax+by = ab$ and its downward parallel translations, along with each lattice point's corresponding value $ax+by$. As an application of our graphical approach, we recover key theorems such as the results from \cite{sylvester1882} and \cite{selmer1977} by reducing them to a graphical problem of locating a lattice point, followed by an evaluation of the linear form $ax+by$ there. Therefore, an arithmetic problem of finding the Frobenius number is reduced to a visual/graphical problem of locating the lattice point, followed by an evaluation, both of which are simple tasks. We then derive relatively simple formulae for the general Frobenius number of three variables. Compared to known results, our results (Theorem \ref{thm:Final}) are not algorithmic in nature nor intricate, which constitutes a significant improvement. \par \vspace{12pt}
\noindent{\bf Keywords:} Frobenius number, graphical approach, lattice points
\end{abstract}
\newpage
\setcounter{page}{1}
\pagestyle{fancy}
\section{Background}
Given relatively prime positive integers $a_1, a_2, \ldots, a_n$ with $a_i \geq 2$, the Frobenius number for $a_1, a_2, \ldots, a_n$, $g(a_1, a_2, \ldots, a_n)$, is defined as the largest integer not representable as a nonnegative integer linear combination of the set $a_1, a_2, \ldots, a_n$. Any integer greater than the Frobenius number can be represented as a nonnegative integer linear combination of $a_1, a_2, \ldots, a_n$. \par Throughout this article, we assume that $a_i < a_{i+1}$. The result for the Frobenius number of two numbers is well-known, but currently, there is no explicit solution to the Frobenius problem for $n>3$, though many have developed algorithms to find the Frobenius number of $n=3$. (\cite{davison1994}, \cite{kennedy1992}, \cite{nijenhuis1972}, \cite{Rödseth1978}, \cite{tripathi2017}, \cite{wilf1978}, and \cite{selmerbeyer1978}). 
\section{Preliminary Results}
In addition to assuming that $a_i < a_{i+1}$, our final result (Theorem \ref{thm:Final}) assumes that $a$ and $b$ are relatively prime; however, the case where $a$ and $b$ are not relatively prime can be deduced by \cite{Johnson1960} result:
\begin{theorem}
Let {\rm{gcd}$(a,b) = d$} and $a,b,c$ are relatively prime, then
\begin{align*}
    g(a,b,c) = d*g(\frac{a}{d},\frac{b}{d},c)+c(d-1)
\end{align*}
\end{theorem}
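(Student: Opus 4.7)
Since $\gcd(a,b,c)=1$ and $\gcd(a,b)=d$, we have $\gcd(d,c)=1$. Write $a=da'$ and $b=db'$ with $\gcd(a',b')=1$. The plan is to stratify integers by residue class modulo $d$ and, in each class, reduce the representability question for $(a,b,c)$ to that for $(a',b',c)$.

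The starting observation is that $N=xa+yb+zc$ with $x,y,z\ge 0$ is equivalent to the existence of some $z\ge 0$ with $zc\le N$, $d\mid N-zc$, and $(N-zc)/d$ representable by the pair $(a',b')$. Because $\gcd(d,c)=1$, for each $N$ there is a unique $z_0\in\{0,1,\ldots,d-1\}$ with $z_0c\equiv N\pmod d$; set $M=(N-z_0c)/d$, which may be negative. I would then prove the key equivalence
\[
N\text{ is non-representable by }(a,b,c)\iff M<0\ \text{or}\ M\text{ is non-representable by }(a',b',c).
\]
The $(\Leftarrow)$ direction splits into two cases: if $M<0$, any candidate $z\equiv z_0\pmod d$ forces $z\ge z_0$ and hence $zc\ge z_0c>N$, so no representation exists; if $M\ge 0$, any representation $N=xa+yb+(z_0+kd)c$ with $k\ge 0$ descends to $M=xa'+yb'+kc$, contradicting non-representability of $M$ by $(a',b',c)$. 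The $(\Rightarrow)$ direction, by contrapositive, lifts any representation $M=xa'+yb'+kc$ to $N=xa+yb+(z_0+kd)c$.

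Within the residue class pinned by $z_0$, every integer has the form $z_0c+Md$, so the largest non-representable $N$ in that class is $z_0c+d\cdot g(a',b',c)$ (any larger $M$ forces $N$ to be representable). Maximizing over $z_0\in\{0,\ldots,d-1\}$ selects $z_0=d-1$ because $c>0$, and yields
\[
g(a,b,c)=(d-1)c+d\cdot g(a/d,b/d,c).
\]
The main subtlety will be the equivalence step: one must insist on non-representability of $M$ by the \emph{triple} $(a',b',c)$ rather than by the pair $(a',b')$ alone, because the variable $k$ tracks exactly the extra copies of $c$ one can spend before dividing by $d$. Once that reduction is stated correctly, both the lift/descent argument and the final maximization over residues are routine.
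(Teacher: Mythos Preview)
Your argument is correct. The residue-class stratification modulo $d$ is exactly the right mechanism: the equivalence you state is valid in both directions, the descent and lift between representations of $N$ by $(a,b,c)$ and of $M$ by $(a',b',c)$ are clean, and the maximization over $z_0\in\{0,\dots,d-1\}$ is justified since $c>0$. The one subtlety you flag---that $M$ must be tested against the \emph{triple} $(a',b',c)$ rather than the pair $(a',b')$---is indeed the only place a careless write-up could go wrong, and you have it right.

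As for comparison with the paper: there is nothing to compare. The paper does not prove this statement; it quotes it as Johnson's 1960 result and uses it only to justify the standing assumption $\gcd(a,b)=1$ for the remainder of the article. Your proof is the standard one (essentially Johnson's own argument), so you have supplied what the paper deliberately omitted.
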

This result allows us to assume that $a$ and $b$ are relatively prime, though the resulting $\frac{a}{d},\frac{b}{d},c$ do not need to be pairwise relatively prime.
\begin{lemma}\label{lem:+diophantine}
There exists a unique integer solution $(x_1, y_1)$ to the linear Diophantine equation $ax+by=c$ such that $0 \leq x_1 < b$, given that $a,b,c \in \mathbb{Z}^+$ and \rm{gcd}$(a,b) = 1$.
\end{lemma}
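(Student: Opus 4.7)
My plan is to prove this via the standard structure theorem for solutions of a linear Diophantine equation in two unknowns, then isolate the unique representative of $x$ modulo $b$ that lies in the interval $[0,b)$.

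First I would establish existence. Since $\gcd(a,b) = 1$, Bezout's identity produces integers $u,v$ with $au + bv = 1$, and scaling by $c$ yields a particular solution $(x_0, y_0) = (cu, cv)$ to $ax + by = c$. The general integer solution is then $(x_0 + bt, \, y_0 - at)$ for $t \in \mathbb{Z}$, a fact I would justify briefly by noting that any two solutions differ by an integer solution of the homogeneous equation $ax + by = 0$, which, because $\gcd(a,b)=1$, forces $b \mid x$. Applying the division algorithm to $x_0$, write $x_0 = bq + r$ with $0 \leq r < b$; setting $t = -q$ produces a solution $(x_1, y_1)$ with $x_1 = r \in [0, b)$.

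Next I would establish uniqueness. Suppose $(x_1, y_1)$ and $(x_1', y_1')$ both satisfy $ax + by = c$ with $x_1, x_1' \in [0, b)$. Subtracting gives $a(x_1 - x_1') + b(y_1 - y_1') = 0$, so $b \mid a(x_1 - x_1')$, and since $\gcd(a,b) = 1$ this forces $b \mid (x_1 - x_1')$. But $|x_1 - x_1'| < b$, hence $x_1 = x_1'$, and then $y_1 = y_1'$ follows from the equation.

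The argument is essentially routine, and I do not anticipate any real obstacle; the only point that deserves care is the step $b \mid a(x_1 - x_1') \Rightarrow b \mid (x_1 - x_1')$, which relies crucially on the coprimality hypothesis $\gcd(a,b) = 1$ (a form of Euclid's lemma). I would keep the exposition short, since this lemma serves mainly as a reference point for the graphical framework developed in the subsequent sections.
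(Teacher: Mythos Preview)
Your proposal is correct and follows essentially the same approach as the paper: Bezout's identity for a particular solution, the general-solution parametrization $(x_0+bt,\,y_0-at)$, and the division algorithm to select the unique representative with $x\in[0,b)$. Your version is a bit more explicit in justifying the general-solution form and in separating the uniqueness argument, whereas the paper dispatches both existence and uniqueness in one line by invoking the uniqueness of the quotient in Euclid's division lemma.
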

\begin{proof}
By Bezout's identity, there exists integers $(x_0,y_0)$ such that $ax_{0}+by_{0}=c$, and all the solutions to the linear Diophantine equation are given by $(x_{0}+kb,y_{0}-ka)$ for $k \in \mathbb{Z}$. To achieve a unique integer $k$ such that $0 \leq x_{0}+kb < b$, Euclid's division lemma gives a unique integer $k$ such that $x_{0}+kb=x_1$ for $0 \leq x_1 < b$.
\end{proof}
We define the region $D = \{(x,y)\in \mathbb{Z}^2 \mid 0 \leq x \leq (b-1), -(a-1)\leq y \leq (a-1), 0 < ax+by < ab\}$ for relatively prime integers $a$ and $b$. For a fixed pair of relatively prime integers $a$ and $b$ and for each point $(x,y)$ within $D$, we denote the corresponding linear form 
\begin{equation}\label{eq:1}
    f_{[a \ b]}({x,y}) = ax+by 
\end{equation}
We emphasize the connection between the lattice point $(x,y) \in \mathbb{Z}^2$ and the value $f_{[a \ b]}({x,y})$ at that point. For example, in $\mathbb{Z}^2$ with $[a \ b]= [7 \ 9]$, the corresponding value for the point $(4,1)$ is $f_{[7 \ 9]}({4,1}) = 7\cdot4+9\cdot1 = 37$ as shown in Figure \ref{fig:GraphSevenNine}. 
\par In terms of the graphical set up as seen in Figure \ref{fig:GraphSevenNine}, a lattice point $(x,y)$ is labeled green if their corresponding value $f_{[a \ b]}({x,y})$ is divisible by $a$ or $b$. We define the set point of points as $D^0 = \{(x,y) \in \mathbb{Z}^2 \mid x$ or $y = 0, 0 \leq ax+by \leq ab\}$. In general,  $\rvert D^0 \lvert = a+b+1$.  If the point $(x,y)$ is not green, it is either blue or red, depending on if $y$ is positive or negative respectively. That is, the blue points are the lattice point set $D^+=\{(x,y)\in D \mid x,y > 0\}$, and the red points are the lattice point set $D^-=\{(x,y)\in D \mid x>0,y<0\}$. Lemma \ref{lem:+diophantine} allows us to limit our observations to the two triangles $(D^+ \cup D^0)$ and $D^-$. We establish a one-to-one correspondence between blue points and red points. 
\begin{proposition}\label{prop:equal_blue_red}
For every blue (respectively red) point $(x,y)$, it has a corresponding red (respectively blue) point $(b-x,-y)$ such that $f_{[a \ b]}(x,y)+f_{[a \ b]}(b-x,-y) = ab$. 
\end{proposition}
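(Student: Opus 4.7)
The proposition has two parts: (i) the algebraic identity on the values of $f_{[a\ b]}$, and (ii) the claim that the map $(x,y)\mapsto (b-x,-y)$ actually sends blue points to red points and vice versa. My plan is to dispatch (i) by direct computation and then verify (ii) by checking each defining inequality of $D^+$ and $D^-$.

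For (i), I would expand
\[
f_{[a\ b]}(x,y)+f_{[a\ b]}(b-x,-y)=ax+by+a(b-x)+b(-y)=ab,
\]
which is immediate. This also shows the map is consistent in the sense that if one value lies strictly between $0$ and $ab$ then so does the other, since the two values sum to $ab$.

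For (ii), suppose $(x,y)\in D^+$, so $1\le x\le b-1$, $1\le y\le a-1$, and $0<ax+by<ab$. I would then check that $(b-x,-y)$ satisfies the defining conditions of $D^-$: the bound $1\le b-x\le b-1$ follows from $1\le x\le b-1$; the bound $-(a-1)\le -y\le -1$ follows from $1\le y\le a-1$; and the value condition $0<a(b-x)+b(-y)<ab$ is exactly $0<ab-(ax+by)<ab$, which follows from (i) together with $0<ax+by<ab$. The symmetric argument (starting from a red point) is identical. Finally, since the map is an involution, $(x,y)\mapsto (b-x,-y)\mapsto (x,y)$, this gives the desired one-to-one correspondence between $D^+$ and $D^-$.

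I do not expect any genuine obstacle; the argument is essentially a careful bookkeeping of inequalities. The only thing to be slightly careful about is noting that the strict inequalities $x,y>0$ (for blue) and $x>0$, $y<0$ (for red), rather than the weak inequalities in the definition of $D$, are what guarantee that $(b-x,-y)$ lands in the opposite open region rather than on the axes $D^0$. Since gcd$(a,b)=1$ rules out $ax+by$ being $0$ or $ab$ for interior lattice points, the strictness of $0<ax+by<ab$ is automatically preserved by the involution.
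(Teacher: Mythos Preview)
Your proposal is correct and follows the same approach as the paper: the direct expansion of $f_{[a\ b]}(x,y)+f_{[a\ b]}(b-x,-y)$ to obtain $ab$. In fact your argument is more complete than the paper's, which only performs the algebraic identity and then asserts the blue--red correspondence without explicitly checking the defining inequalities of $D^+$ and $D^-$ as you do in part (ii).
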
 
\begin{proof}
\begin{align*}
    f_{[a \ b]}(x,y)+f_{[a \ b]}(b-x,-y) &= ax+by + a(b-x) + b(-y) \\
    &= ax+by+ab-ax-by \\
    &= ab
\end{align*}
Alternatively, if a point $(x,y)$ with a corresponding value $f_{[a \ b]}(x,y)$ is a blue point (respectively red), it has a corresponding red (respectively blue) point $(b-x,-y)$ with a value of $f_{[a \ b]}(x,y)$ which has a corresponding value $ab-f_{[a \ b]}(x,y)$. Therefore, there are just as many blue points as red points or $\lvert D^+ \rvert = \lvert D^- \rvert$. 
Note that all the blue points have a corresponding value $f_{[a \ b]}(x,y)$ that is a linear combination of $a$ and $b$ with positive integer coefficients. All red points have a $f_{[a \ b]}(x,y)$ value that is in the form $ab$ minus a linear combination of $a$ and $b$ with positive integer coefficients.
\end{proof}
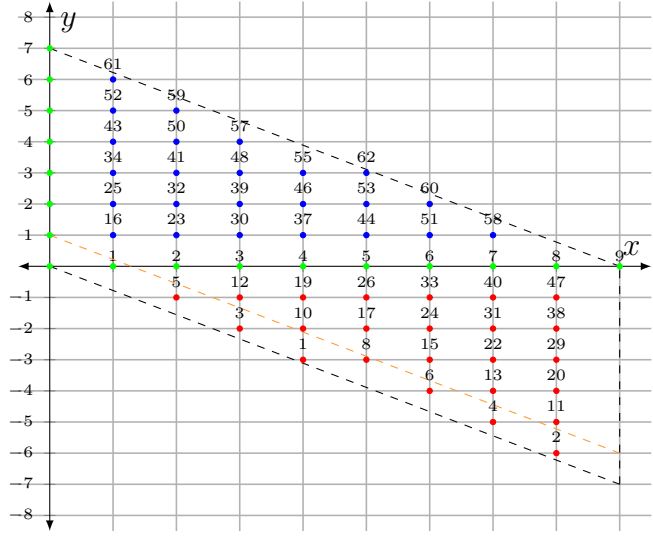
\begin{figure}
\begin{center}
\begin{tikzpicture}
  \begin{axis}[
    axis lines=center,
    xlabel=$x$,
    ylabel=$y$,
    xmin=0,
    xmax=9,
    ymin=-8,
    ymax=8,
    xticklabel style={above},
    xtick={0,1, ...,9},
    ytick={-8,-7, ...,7,8},
    grid=both,
    grid style={line width=0.2pt, draw=gray!30},
    major grid style={line width=0.6pt,draw=gray!60},
    ticklabel style={font=\tiny},
    enlargelimits={abs=0.5},
    axis line style={latex-latex},
  ]

  \addplot[
    domain=0:9,
    samples=100,
    color=black,
    dashed,
  ]{((63 - 7*x)/9)};

  \addplot[
    domain=0:9,
    samples=100,
    color=black,
    dashed,
  ]{((-7*x)/9)};
  \addplot[
    domain=0:9,
    samples=100,
    color=brightorange,
    dashed,
  ]{((-7*x)/9+1)};
  \addplot[
    domain=-10:10,
    samples=2,
    color=black,
    dashed
  ] coordinates {(9, -7) (9, 0)};

  \addplot[
    only marks,
    mark=*,
    mark options={color=red, fill=red, scale=.5},
  ] coordinates {(8, -1)};
  \node[above, font=\tiny] at (axis cs:8,-1) {47};

  \addplot[
    only marks,
    mark=*,
    mark options={color=red, fill=red, scale=.5},
  ] coordinates {(7, -1)};
  \node[above, font=\tiny] at (axis cs:7,-1) {40};
    \addplot[
    only marks,
    mark=*,
    mark options={color=red, fill=red, scale=.5},
  ] coordinates {(6, -1)};
  \node[above, font=\tiny] at (axis cs:6,-1) {33};
    \addplot[
    only marks,
    mark=*,
    mark options={color=red, fill=red, scale=.5},
  ] coordinates {(5, -1)};
  \node[above, font=\tiny] at (axis cs:5,-1) {26};
    \addplot[
    only marks,
    mark=*,
    mark options={color=red, fill=red, scale=.5},
  ] coordinates {(4, -1)};
  \node[above, font=\tiny] at (axis cs:4,-1) {19};
    \addplot[
    only marks,
    mark=*,
    mark options={color=red, fill=red, scale=.5},
  ] coordinates {(3, -1)};
  \node[above, font=\tiny] at (axis cs:3,-1) {12};
  \addplot[
    only marks,
    mark=*,
    mark options={color=red, fill=red, scale=.5},
  ] coordinates {(2, -1)};
  \node[above, font=\tiny] at (axis cs:2,-1) {5};
  \addplot[
    only marks,
    mark=*,
    mark options={color=red, fill=red, scale=.5},
  ] coordinates {(8, -2)};
  \node[above, font=\tiny] at (axis cs:8,-2) {38};
  \addplot[
    only marks,
    mark=*,
    mark options={color=red, fill=red, scale=.5},
  ] coordinates {(7, -2)};
  \node[above, font=\tiny] at (axis cs:7,-2) {31};
  \addplot[
    only marks,
    mark=*,
    mark options={color=red, fill=red, scale=.5},
  ] coordinates {(6, -2)};
  \node[above, font=\tiny] at (axis cs:6,-2) {24};
  \addplot[
    only marks,
    mark=*,
    mark options={color=red, fill=red, scale=.5},
  ] coordinates {(5, -2)};
  \node[above, font=\tiny] at (axis cs:5,-2) {17};
  \addplot[
    only marks,
    mark=*,
    mark options={color=red, fill=red, scale=.5},
  ] coordinates {(4, -2)};
  \node[above, font=\tiny] at (axis cs:4,-2) {10};
  \addplot[
    only marks,
    mark=*,
    mark options={color=red, fill=red, scale=.5},
  ] coordinates {(3, -2)};
  \node[above, font=\tiny] at (axis cs:3,-2) {3};
  \addplot[
    only marks,
    mark=*,
    mark options={color=red, fill=red, scale=.5},
  ] coordinates {(8, -3)};
  \node[above, font=\tiny] at (axis cs:8,-3) {29};
  \addplot[
    only marks,
    mark=*,
    mark options={color=red, fill=red, scale=.5},
  ] coordinates {(7, -3)};
  \node[above, font=\tiny] at (axis cs:7,-3) {22};
   \addplot[
    only marks,
    mark=*,
    mark options={color=red, fill=red, scale=.5},
  ] coordinates {(6, -3)};
  \node[above, font=\tiny] at (axis cs:6,-3) {15};
   \addplot[
    only marks,
    mark=*,
    mark options={color=red, fill=red, scale=.5},
  ] coordinates {(5, -3)};
  \node[above, font=\tiny] at (axis cs:5,-3) {8};
  \addplot[
    only marks,
    mark=*,
    mark options={color=red, fill=red, scale=.5},
  ] coordinates {(4, -3)};
  \node[above, font=\tiny] at (axis cs:4,-3) {1};
   \addplot[
    only marks,
    mark=*,
    mark options={color=red, fill=red, scale=.5},
  ] coordinates {(8, -4)};
  \node[above, font=\tiny] at (axis cs:8,-4) {20};
   \addplot[
    only marks,
    mark=*,
    mark options={color=red, fill=red, scale=.5},
  ] coordinates {(7, -4)};
  \node[above, font=\tiny] at (axis cs:7,-4) {13};
  \addplot[
    only marks,
    mark=*,
    mark options={color=red, fill=red, scale=.5},
  ] coordinates {(6, -4)};
  \node[above, font=\tiny] at (axis cs:6,-4) {6};
  \addplot[
    only marks,
    mark=*,
    mark options={color=red, fill=red, scale=.5},
  ] coordinates {(7, -5)};
  \node[above, font=\tiny] at (axis cs:7,-5) {4};
  \addplot[
    only marks,
    mark=*,
    mark options={color=red, fill=red, scale=.5},
  ] coordinates {(8, -5)};
  \node[above, font=\tiny] at (axis cs:8,-5) {11};
   \addplot[
    only marks,
    mark=*,
    mark options={color=red, fill=red, scale=.5},
  ] coordinates {(8, -6)};
  \node[above, font=\tiny] at (axis cs:8,-6) {2};

  \addplot[
    only marks,
    mark=*,
    mark options={color=blue, fill=blue, scale=.5},
  ] coordinates {(1, 1)};
  \node[above, font=\tiny] at (axis cs:1,1) {16};
  \addplot[
    only marks,
    mark=*,
    mark options={color=blue, fill=blue, scale=.5},
  ] coordinates {(2, 1)};
  \node[above, font=\tiny] at (axis cs:2,1) {23};
  \addplot[
    only marks,
    mark=*,
    mark options={color=blue, fill=blue, scale=.5},
  ] coordinates {(3, 1)};
  \node[above, font=\tiny] at (axis cs:3,1) {30};
  \addplot[
    only marks,
    mark=*,
    mark options={color=blue, fill=blue, scale=.5},
  ] coordinates {(4, 1)};
  \node[above, font=\tiny] at (axis cs:4,1) {37};
  \addplot[
    only marks,
    mark=*,
    mark options={color=blue, fill=blue, scale=.5},
  ] coordinates {(5, 1)};
  \node[above, font=\tiny] at (axis cs:5,1) {44};
\addplot[
    only marks,
    mark=*,
    mark options={color=blue, fill=blue, scale=.5},
  ] coordinates {(6, 1)};
  \node[above, font=\tiny] at (axis cs:6,1) {51};
  \addplot[
    only marks,
    mark=*,
    mark options={color=blue, fill=blue, scale=.5},
  ] coordinates {(7, 1)};
  \node[above, font=\tiny] at (axis cs:7,1) {58};
  \addplot[
    only marks,
    mark=*,
    mark options={color=blue, fill=blue, scale=.5},
  ] coordinates {(1, 2)};
  \node[above, font=\tiny] at (axis cs:1,2) {25};
  \addplot[
    only marks,
    mark=*,
    mark options={color=blue, fill=blue, scale=.5},
  ] coordinates {(2, 2)};
  \node[above, font=\tiny] at (axis cs:2,2) {32};
  \addplot[
    only marks,
    mark=*,
    mark options={color=blue, fill=blue, scale=.5},
  ] coordinates {(3, 2)};
  \node[above, font=\tiny] at (axis cs:3,2) {39};
  \addplot[
    only marks,
    mark=*,
    mark options={color=blue, fill=blue, scale=.5},
  ] coordinates {(4, 2)};
  \node[above, font=\tiny] at (axis cs:4,2) {46};
  \addplot[
    only marks,
    mark=*,
    mark options={color=blue, fill=blue, scale=.5},
  ] coordinates {(5, 2)};
  \node[above, font=\tiny] at (axis cs:5,2) {53};
  \addplot[
    only marks,
    mark=*,
    mark options={color=blue, fill=blue, scale=.5},
  ] coordinates {(6, 2)};
  \node[above, font=\tiny] at (axis cs:6,2) {60};
   \addplot[
    only marks,
    mark=*,
    mark options={color=blue, fill=blue, scale=.5},
  ] coordinates {(1, 3)};
  \node[above, font=\tiny] at (axis cs:1,3) {34};
   \addplot[
    only marks,
    mark=*,
    mark options={color=blue, fill=blue, scale=.5},
  ] coordinates {(2, 3)};
  \node[above, font=\tiny] at (axis cs:2,3) {41};
  \addplot[
    only marks,
    mark=*,
    mark options={color=blue, fill=blue, scale=.5},
  ] coordinates {(3, 3)};
  \node[above, font=\tiny] at (axis cs:3,3) {48};
  \addplot[
    only marks,
    mark=*,
    mark options={color=blue, fill=blue, scale=.5},
  ] coordinates {(4, 3)};
  \node[above, font=\tiny] at (axis cs:4,3) {55};
  \addplot[
    only marks,
    mark=*,
    mark options={color=blue, fill=blue, scale=.5},
  ] coordinates {(5, 3)};
  \node[above, font=\tiny] at (axis cs:5,3) {62};
  \addplot[
    only marks,
    mark=*,
    mark options={color=blue, fill=blue, scale=.5},
  ] coordinates {(1, 4)};
  \node[above, font=\tiny] at (axis cs:1,4) {43};
   \addplot[
    only marks,
    mark=*,
    mark options={color=blue, fill=blue, scale=.5},
  ] coordinates {(2, 4)};
  \node[above, font=\tiny] at (axis cs:2,4) {50};
   \addplot[
    only marks,
    mark=*,
    mark options={color=blue, fill=blue, scale=.5},
  ] coordinates {(3, 4)};
  \node[above, font=\tiny] at (axis cs:3,4) {57};
  \addplot[
    only marks,
    mark=*,
    mark options={color=blue, fill=blue, scale=.5},
  ] coordinates {(1, 5)};
  \node[above, font=\tiny] at (axis cs:1,5) {52};
  \addplot[
    only marks,
    mark=*,
    mark options={color=blue, fill=blue, scale=.5},
  ] coordinates {(2, 5)};
  \node[above, font=\tiny] at (axis cs:2,5) {59};
   \addplot[
    only marks,
    mark=*,
    mark options={color=blue, fill=blue, scale=.5},
  ] coordinates {(1, 6)};
  \node[above, font=\tiny] at (axis cs:1,6) {61};
  \addplot[
    only marks,
    mark=*,
    mark options={color=green, fill=green, scale=.5},
  ] coordinates {(0, 1)};
  \addplot[
    only marks,
    mark=*,
    mark options={color=green, fill=green, scale=.5},
  ] coordinates {(0, 2)};
  \addplot[
    only marks,
    mark=*,
    mark options={color=green, fill=green, scale=.5},
  ] coordinates {(0, 3)};
  \addplot[
    only marks,
    mark=*,
    mark options={color=green, fill=green, scale=.5},
  ] coordinates {(0, 4)};
  \addplot[
    only marks,
    mark=*,
    mark options={color=green, fill=green, scale=.5},
  ] coordinates {(0,5)};
  \addplot[
    only marks,
    mark=*,
    mark options={color=green, fill=green, scale=.5},
  ] coordinates {(0, 6)};
  \addplot[
    only marks,
    mark=*,
    mark options={color=green, fill=green, scale=.5},
  ] coordinates {(1, 0)};
  \addplot[
    only marks,
    mark=*,
    mark options={color=green, fill=green, scale=.5},
  ] coordinates {(2, 0)};
  \addplot[
    only marks,
    mark=*,
    mark options={color=green, fill=green, scale=.5},
  ] coordinates {(3, 0)};
  \addplot[
    only marks,
    mark=*,
    mark options={color=green, fill=green, scale=.5},
  ] coordinates {(4, 0)};
  \addplot[
    only marks,
    mark=*,
    mark options={color=green, fill=green, scale=.5},
  ] coordinates {(5, 0)};
  \addplot[
    only marks,
    mark=*,
    mark options={color=green, fill=green, scale=.5},
  ] coordinates {(6, 0)};
  \addplot[
    only marks,
    mark=*,
    mark options={color=green, fill=green, scale=.5},
  ] coordinates {(7, 0)};
  \addplot[
    only marks,
    mark=*,
    mark options={color=green, fill=green, scale=.5},
  ] coordinates {(8, 0)};
   \addplot[
    only marks,
    mark=*,
    mark options={color=green, fill=green, scale=.5},
  ] coordinates {(9, 0)};
  \addplot[
    only marks,
    mark=*,
    mark options={color=green, fill=green, scale=.5},
  ] coordinates {(0, 7)};
  \addplot[
    only marks,
    mark=*,
    mark options={color=green, fill=green, scale=.5},
  ] coordinates {(0, 0)};
  \end{axis}
\end{tikzpicture}
\end{center}
\caption{Graph of $f_{[7 \ 9]}(\mathbf{x})$}
  \label{fig:GraphSevenNine}
  \end{figure}
\begin{corollary}\label{cor:exact_blue_red}
There are exactly $\frac{(a-1)(b-1)}{2}$ blue points and exactly $\frac{(a-1)(b-1)}{2}$ red points.
\end{corollary}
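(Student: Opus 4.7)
The plan is to count the blue points $|D^+|$ directly via a symmetry argument, and then invoke Proposition~\ref{prop:equal_blue_red} to transfer the count to $|D^-|$. By definition, $D^+$ consists of the lattice points $(x,y)$ with $1 \le x \le b-1$ and $1 \le y \le a-1$ satisfying $ax+by<ab$; the inequality $ax+by>0$ is automatic for positive $x,y$. Thus $D^+$ sits inside the rectangular grid $R = \{1,\dots,b-1\} \times \{1,\dots,a-1\}$, which contains exactly $(a-1)(b-1)$ lattice points.

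Next I would introduce the involution $\sigma(x,y) = (b-x,\,a-y)$, which is a bijection of $R$ to itself. The key identity is
\[
a(b-x) + b(a-y) = 2ab - (ax+by),
\]
so $\sigma$ exchanges the subset $\{(x,y)\in R : ax+by < ab\}$ with $\{(x,y)\in R : ax+by > ab\}$. To finish I must rule out the middle case, i.e.\ show no lattice point of $R$ lies on the line $ax+by = ab$. If such a point existed, then $ax = b(a-y)$, and since $\gcd(a,b)=1$ this forces $b \mid x$, contradicting $1 \le x \le b-1$. Hence $R$ partitions into exactly two equinumerous subsets interchanged by $\sigma$, yielding $|D^+| = \frac{(a-1)(b-1)}{2}$.

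Finally I would apply Proposition~\ref{prop:equal_blue_red}, whose bijection $(x,y)\mapsto(b-x,-y)$ gives $|D^-| = |D^+| = \frac{(a-1)(b-1)}{2}$. The main conceptual obstacle is recognizing the correct involution: geometrically, $\sigma$ is the reflection through the center of the rectangle $R$, which is the same as reflection across the antidiagonal $ax+by=ab$. Once this symmetry is identified, the coprimality hypothesis $\gcd(a,b)=1$ is used in exactly one place, namely to show the line $ax+by=ab$ contains no lattice points in $R$, after which the count reduces to a routine pairing argument.
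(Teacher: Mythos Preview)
Your argument is correct. The involution $\sigma(x,y)=(b-x,a-y)$ does pair off the two halves of the rectangle $R$, and the coprimality step ruling out points on $ax+by=ab$ is exactly what is needed. The final appeal to Proposition~\ref{prop:equal_blue_red} matches what the paper does.

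However, the paper's own proof proceeds differently: it applies Pick's Theorem to the triangle $D^+\cup D^0$ with vertices $(0,0)$, $(b,0)$, $(0,a)$. From the area $\tfrac{ab}{2}$ and the boundary count $j=a+b+1$ (here the hypothesis $\gcd(a,b)=1$ enters implicitly, guaranteeing there are no lattice points on the hypotenuse other than its endpoints), Pick's formula $A=i+\tfrac{j}{2}-1$ yields $i=\tfrac{(a-1)(b-1)}{2}$ interior points, which are precisely the blue points. Your route is more elementary and self-contained: it avoids importing Pick's Theorem and isolates the use of coprimality in a single transparent divisibility step. The paper's route, on the other hand, fits naturally with the surrounding graphical narrative, since the triangle $D^+\cup D^0$ is already the object being drawn and discussed. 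Both approaches finish identically via Proposition~\ref{prop:equal_blue_red}.
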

\begin{proof}
Consider the region $D^+ \cup D^0$. By Pick's Theorem, the area of the triangle $D^+ \cup D^0$ is $Area(D^+ \cup D^0) = i + \frac{j}{2}-1$ where $i$ is the amount of interior lattice points and $j$ is the amount of boundary lattice points. So,
\begin{align*}
A(D^+ \cup D^0) &= i + \frac{j}{2}-1\\
\frac{a\cdot{b}}{2} &= i + \frac{a+b+1}{2}-1 \\
i &= \frac{ab-a-b-1}{2}+1 \\
i  &= \frac{(a-1)(b-1)}{2}
\end{align*}
Therefore, $\lvert D^+ \rvert$ is equal to $i=\frac{(a-1)(b-1)}{2}$. By Proposition \ref{prop:equal_blue_red}, $\lvert D^- \rvert$ also equals $\frac{(a-1)(b-1)}{2}$.
\end{proof}
For the special case of $n = 2$, \cite{sylvester1882} discovered an explicit formula for the Frobenius number of two relatively prime integers: $ g(a,b) = ab - a - b$.
\begin{theorem}\label{thm:sylvester_2} {\rm{(\cite{sylvester1882})}}
Let $a,b$ be relatively prime integers. Then the greatest number that is not representable as a nonnegative linear combination of $a, b$ is given by $g(a,b) = ab-a-b$.
\end{theorem}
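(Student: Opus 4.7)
The plan is to translate representability directly into the geometry of the red triangle $D^-$. First I would use Lemma \ref{lem:+diophantine} to establish the following characterization: a positive integer $c$ is expressible as a nonnegative integer combination of $a$ and $b$ if and only if the unique solution $(x_1,y_1)$ of $ax+by=c$ with $0 \le x_1 \le b-1$ satisfies $y_1 \ge 0$. The nontrivial direction uses the parametrization of all integer solutions as $(x_1+kb,\,y_1-ka)$: among these, the distinguished solution maximizes the $y$-coordinate subject to a nonnegative $x$-coordinate, so if its $y_1$ is already negative then no representation has both coordinates nonnegative.

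Once this equivalence is in place, I would restrict attention to $0 < c < ab$, where $(x_1,y_1)$ lies in $D \cup D^0$ and is therefore green, blue, or red. Green and blue points have $y_1 \ge 0$ and hence correspond to representable $c$, while red points are in bijection with the non-representable positive integers strictly less than $ab$. The problem then reduces to locating the maximum value of the linear form $f_{[a\ b]}(x,y)=ax+by$ over the red region $D^-$.

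Because $a,b>0$ and $y<0$ throughout $D^-$, the form $ax+by$ increases with $x$ and decreases with $|y|$, so its maximum over the finite red lattice is attained at the corner with the largest admissible $x$ and the smallest admissible $|y|$, namely $(b-1,-1)$. Evaluating there gives $a(b-1)+b(-1)=ab-a-b$, so $ab-a-b$ is not representable while every integer in the open interval $(ab-a-b,\,ab)$ is. Finally, for $c\ge ab$ the inequality $by_1=c-ax_1\ge ab-a(b-1)=a>0$ forces $y_1\ge 1$, so every such $c$ is representable as well.

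The main obstacle is just cleanly pinning down the initial equivalence between representability and the sign of $y_1$ in the distinguished solution; once that link is made, Sylvester's bound falls out by inspecting where $f_{[a\ b]}$ is maximized on the triangular lattice $D^-$, which is essentially a one-line visual argument in the framework already developed.
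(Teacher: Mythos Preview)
Your argument is correct, but note that the paper does not actually prove Theorem~\ref{thm:sylvester_2}: it is stated as Sylvester's classical result with a citation, and the surrounding material (Proposition~\ref{prop:f(x,y)_sylvester_2} and the paragraph following it) \emph{uses} the formula $g(a,b)=ab-a-b$ to identify the point $(b-1,-1)$, rather than deriving the formula from the lattice picture. The nearest thing to a proof in the paper is Theorem~\ref{thm:equal_brauer} (Brauer's dichotomy), which establishes that the values attached to red points are exactly the non-representable integers in $(0,ab)$ not divisible by $a$ or $b$; the paper then remarks informally that translating $ax+by=ab$ downward first meets $D^-$ at $(b-1,-1)$.

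Your proposal assembles these scattered ingredients into a genuine proof: you make explicit the equivalence ``representable $\Longleftrightarrow y_1\ge 0$ in the distinguished solution'' (this is the content of Lemma~\ref{lem:+diophantine} plus the parametrization, and is implicit in the proof of Theorem~\ref{thm:equal_brauer}), you then identify $D^-$ with the non-representable values below $ab$, maximize $f_{[a\ b]}$ over $D^-$ at the corner $(b-1,-1)$, and finally dispose of $c\ge ab$ directly. This is exactly the derivation the paper advertises in the abstract (``reducing [Sylvester's result] to a graphical problem of locating a lattice point, followed by an evaluation'') but never writes out formally. So your route is not different in spirit---it is the intended graphical argument---but it is logically self-contained where the paper's treatment is not.
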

\begin{proposition}\label{prop:f(x,y)_sylvester_2}
$g(a,b) = f_{[a \ b]}(b-1,-1)$.
\end{proposition}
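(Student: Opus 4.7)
The proposition asserts that the Frobenius number of $a$ and $b$ can be read off the diagram as the value of the linear form $f_{[a\ b]}$ at one specific red lattice point, namely $(b-1,-1)$. My plan is to present the argument in a way consistent with the paper's graphical philosophy, rather than simply invoking Theorem \ref{thm:sylvester_2}. The short version is the one-line computation
\begin{equation*}
f_{[a\ b]}(b-1,-1) = a(b-1) + b(-1) = ab - a - b,
\end{equation*}
which, combined with Sylvester's formula, closes the proof. I will, however, give a self-contained graphical derivation so that the proposition doubles as a proof of Theorem \ref{thm:sylvester_2}.

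First I would verify that $(b-1,-1)$ actually lies in the red region $D^-$: the coordinates satisfy $1 \leq b-1 \leq b-1$ and $-(a-1) \leq -1 \leq -1$, and since $a,b\geq 2$ one has $0 < ab-a-b < ab$. Next, I would observe that on the discrete region $D^-$, the linear form $ax+by$ is a monotone function of $x$ and $y$ separately, increasing with $x$ and decreasing with $|y|$; therefore it attains its maximum over $D^-$ at the corner with largest $x$ and least negative $y$, namely at $(b-1,-1)$, with value $ab-a-b$.

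The heart of the argument is to identify this maximum red value with $g(a,b)$. By Lemma \ref{lem:+diophantine}, every positive integer $N$ has a unique representation $N = ax_1 + by_1$ with $0 \leq x_1 < b$. If $N < ab$, this $(x_1,y_1)$ is precisely the lattice point of $D \cup D^0$ at which $f_{[a\ b]}$ takes the value $N$, and $N$ is representable as a nonnegative integer combination of $a,b$ if and only if $y_1 \geq 0$ (i.e.\ the point is green or blue), since any other Diophantine solution either has $y$ more negative or $x$ negative. Thus non-representable values in $(0,ab)$ correspond exactly to red points. For $N \geq ab$, the same reduction gives $by_1 = N - ax_1 \geq ab - a(b-1) = a$, so $y_1 \geq 1$ and $N$ is automatically representable. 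Consequently $g(a,b)$ equals the maximum red value, which is $f_{[a\ b]}(b-1,-1) = ab-a-b$.

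The only delicate step is the representability dichotomy in the third paragraph, but it is immediate from Lemma \ref{lem:+diophantine} together with the fact that shifting the unique solution $(x_1,y_1)$ by $(kb,-ka)$ for $k\neq 0$ forces either $x$ or $y$ to become negative. Everything else is bookkeeping about which corner of the red triangle carries the largest value.
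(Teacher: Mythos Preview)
Your proposal is correct. The paper's own proof of this proposition is exactly the trivial route you flag at the outset: it invokes Theorem~\ref{thm:sylvester_2} (Sylvester's formula $g(a,b)=ab-a-b$) and then computes $f_{[a\ b]}(b-1,-1)=a(b-1)+b(-1)=ab-a-b$, and stops there.

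Your write-up goes substantially further, supplying a self-contained graphical argument: you identify the red points $D^-$ with the non-representable integers in $(0,ab)$ via Lemma~\ref{lem:+diophantine}, show that every $N\geq ab$ is representable, and then read off the maximum red value at the corner $(b-1,-1)$. This effectively \emph{proves} Sylvester's theorem rather than citing it, and it formalizes the informal remark the paper makes immediately after the proposition about translating $ax+by=ab$ downward until it first meets $D^-$. What your approach buys is independence from Theorem~\ref{thm:sylvester_2} and a tighter fit with the paper's graphical program; what the paper's approach buys is brevity.
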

\begin{proof}
By Theorem \ref{thm:sylvester_2}, $g(a,b) = ab - a - b$. By the aforementioned definition, $f_{[a \ b]}(b-1,-1)$ is equal to
\begin{align*}
  f_{[a \ b]}(b-1,-1) &= a(b-1) + b(-1)\\
    &= ab-a-b.
\end{align*}
\end{proof}
When the line $ax+by=ab$ is translated down, the first such line with a lattice point in $D^-$ would be $ax+by = ab-a-b$ at $(x,y) = (b-1,-1)$. Hence, the line $ax+by = ab-ax_0-by_0$ has no point in $D^+$ by Lemma \ref{lem:+diophantine}. Accordingly, the Frobenius number for the pair ${7, 9}$ is the point $(8,-1)$ in the northeast corner of $D^-$ which has a corresponding value of $f_{[7 \ 9]}(8,-1) = 47$. \cite{brauer1942} refines Sylvester's argument, and our previous results (Proposition \ref{prop:equal_blue_red} and Corollary \ref{cor:exact_blue_red}) are graphical demonstrations of Brauer's concepts, making his ideas and Sylvester's result more visually evident.
\begin{theorem}\label{thm:equal_brauer}
Let $a$ and $b$ be relatively prime positive integers. Then, every positive integer $c$, between $0$ and $ab$, not divisible by $a$ or by $b$ is representable in the form
\begin{equation}
  \label{eq:ax+by}
c = ax+by
\end{equation}
for $x, y > 0$ or
\begin{equation}
  \label{eq:ab-ax-by}
    c = ab-ax-by
\end{equation}
for $x, y > 0$. Additionally, $c$ is not simultaneously representable in both form $(\ref{eq:ax+by})$ and form $(\ref{eq:ab-ax-by})$.
\end{theorem}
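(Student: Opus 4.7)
The plan is to derive both existence and non-simultaneity of the two representations directly from Lemma \ref{lem:+diophantine}, using a sign split on its canonical solution. Graphically, the claim is that every non-green value $c$ lying strictly between $0$ and $ab$ shows up as the $f_{[a\,b]}$-label of a point in $D^+$ or in $D^-$, but never both, so the argument is really just ``make the unique $0 \le x_1 < b$ solution, then read off its color.''

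For existence, I will apply Lemma \ref{lem:+diophantine} to produce the unique $(x_1,y_1)$ with $ax_1 + by_1 = c$ and $0 \le x_1 < b$. Because $b \nmid c$, setting $x_1 = 0$ would give $by_1 = c$, a contradiction, so $x_1 > 0$; analogously $a \nmid c$ forces $y_1 \ne 0$. If $y_1 > 0$, then $(x_1,y_1)$ is already a representation of the form (\ref{eq:ax+by}). If $y_1 < 0$, I rewrite $ax_1 + by_1 = ab - a(b - x_1) - b(-y_1)$; the new coefficients $b - x_1$ and $-y_1$ are both strictly positive (the first because $x_1 < b$), producing a representation of the form (\ref{eq:ab-ax-by}).

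For non-simultaneity, I will assume for contradiction that $c = ax_1 + by_1 = ab - ax_2 - by_2$ with all four variables positive. Adding the two expressions for $c$ gives $a(x_1 + x_2) + b(y_1 + y_2) = ab$. Since $\gcd(a,b) = 1$, it follows that $b \mid x_1 + x_2$, and since $x_1 + x_2 > 0$ we must have $x_1 + x_2 \ge b$. Substituting back forces $y_1 + y_2 \le 0$, contradicting positivity.

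The main obstacle, such as it is, lies in the case $y_1 < 0$: one must recognize that the shift $(x_1,y_1) \mapsto (b - x_1, -y_1)$ is exactly the blue/red involution of Proposition \ref{prop:equal_blue_red}, so that a solution with $y < 0$ automatically manifests as a representation in form (\ref{eq:ab-ax-by}). An alternative, more geometric proof is available — combining Corollary \ref{cor:exact_blue_red} with the observation that there are precisely $(a-1)(b-1)$ integers in $(0,ab)$ divisible by neither $a$ nor $b$, then checking that the blue and red labels are all distinct — but the Lemma \ref{lem:+diophantine} route above is shorter and makes the non-simultaneity clause explicit rather than a consequence of cardinality matching.
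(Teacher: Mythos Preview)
Your proof is correct and follows essentially the same route as the paper: both take the unique solution with $0<x_1<b$ (the paper phrases this via the congruence $au\equiv c\pmod b$), split on the sign of $y_1$, and in the negative case apply the shift $(x_1,y_1)\mapsto(b-x_1,-y_1)$ to land in form~(\ref{eq:ab-ax-by}). Your non-simultaneity argument spells out the divisibility step $b\mid x_1+x_2$ that the paper leaves implicit when it calls $ab=a(x+x')+b(y+y')$ with positive sums ``impossible,'' but the underlying idea is identical.
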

\begin{proof}
Let $u$ be the solution to the congruence $au \equiv c \pmod{b}$ where $0 < u < b$. If $c = au+bv$ where $v > 0$ and $v \in \mathbb{Z}$, $c$ is in form $(\ref{eq:ab-ax-by})$: $ax+by$ where $x=u$ and $v=y$. When $v < 0$ and $v \in \mathbb{Z}$, then $v=-y$, and
\begin{align*}
c &= au+bv\\
&= ab-a(b-u)-by\\
&= ab-ax-by,\\
\end{align*}
where $x=b-u > 0$, and thus, $c$ is in form ($\ref{eq:ax+by}$).
If $c$ is simultaneously representable in both forms, $ab-ax-by = ax' + by'$ where $x,x',y,y' > 0$.  Then, $ab=a(x+x')+b(y+y')$, which is impossible since $x+x' > 0$ and $y + y' > 0$. Note that the lattice points in Theorem \ref{thm:equal_brauer} of the form (\ref{eq:ax+by}) and (\ref{eq:ab-ax-by}) are blue and red points, respectively.
\end{proof}
Just as Brauer classifies $c$ into two forms for a relatively prime pair $a$ and $b$, we classify $D$ into two categories: blue $(D^+)$ and red lattice points $(D^-)$. Graphically speaking, Theorem \ref{thm:equal_brauer} can be interpreted that the lattice points not labeled green are either in $D^+$, labeled blue, or in $D^-$, labeled red, but not in both $D^+$ and $D^-$, i.e. $D^+ \cap D^- = \emptyset$.
\section{Exceptional Values}\label{sec:Exceptional Values}
For three relatively prime integers $a$, $b$, and $c$, it is obvious that the Frobenius number $g(a,b,c) \leq g(a,b)$ because any nonnegative integer linear combination of ${a,b}$ is also a nonnegative integer linear combination of ${a,b,c}$. We will see that for relatively prime integers $a$ and $b$, for many $c$, the Frobenius number $g(a,b,c) = g(a,b)$. In what follows, we will concentrate on those $c$ such that $g(a,b,c) < g(a,b)$.
\begin{definition}\label{def:exceptional_defn}
For three relatively prime positive integers $a$, $b$ and $c$ where $a<b<c$, we define $c$ as an \textit{exceptional value} with respect to $a$ and $b$ when $g(a,b,c) < g(a,b)$. 
\end{definition}
Now, we characterize the exceptional values $c$.
\begin{proposition}\label{prop:exceptional_scenario}
$c$ is of the form $ab-ax-by$ for $x, y > 0$ if and only if $g(a,b,c) < g(a,b)$. In other words, a lattice point in $D$ is in $D^-$ if and only if its corresponding value $c$ is exceptional.
\end{proposition}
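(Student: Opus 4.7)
The plan is to prove both directions by relating the condition $g(a,b,c) < g(a,b)$ directly to the representability of $g(a,b) = ab - a - b$, using only Sylvester's formula (Theorem \ref{thm:sylvester_2}), the canonical Diophantine parameterization from Lemma \ref{lem:+diophantine}, and the elementary monotonicity $g(a,b,c) \leq g(a,b)$ noted at the start of this section.

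For the ``if'' direction, I would start from the assumption $c = ab - ax_0 - by_0$ with $x_0, y_0 \geq 1$ and perform the one-line rewrite
\[
ab - a - b \;=\; c + a(x_0 - 1) + b(y_0 - 1),
\]
whose right-hand side is a nonnegative integer combination of $a$, $b$, and $c$. Hence $g(a,b)$ itself lies in the numerical semigroup $\langle a, b, c\rangle$, so $g(a,b,c) \neq g(a,b)$; combined with $g(a,b,c) \leq g(a,b)$ this yields $g(a,b,c) < g(a,b)$.

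For the ``only if'' direction, assume $g(a,b,c) < g(a,b)$. I would first rule out $c \in \langle a, b\rangle$: any such expression $c = \alpha a + \beta b$ with $\alpha, \beta \geq 0$ would give $\langle a,b,c\rangle = \langle a,b\rangle$ and hence $g(a,b,c) = g(a,b)$, contradicting the hypothesis. Applying Lemma \ref{lem:+diophantine} then yields a unique $(x_1, y_1) \in \mathbb{Z}^2$ with $0 \leq x_1 < b$ and $c = ax_1 + by_1$; since all other integer solutions are $(x_1 + kb,\, y_1 - ka)$ with $k \in \mathbb{Z}$, the constraint $x_1 + kb \geq 0$ forces $k \geq 0$, and then $y_1 - ka \geq 0$ requires $y_1 \geq 0$. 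Our standing assumption $c \notin \langle a,b\rangle$ therefore forces $y_1 < 0$, and positivity of $c = ax_1 + by_1$ then rules out $x_1 = 0$. Setting $x := b - x_1 > 0$ and $y := -y_1 > 0$ produces positive integers satisfying $c = a(b-x) + b(-y) = ab - ax - by$, as required.

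No step presents real difficulty. The one logical hinge deserving care is the equivalence ``$c \in \langle a, b\rangle$ iff the canonical representation of Lemma \ref{lem:+diophantine} has $y_1 \geq 0$,'' which drops out immediately from the explicit parameterization of solutions but is precisely the point at which the arithmetic Frobenius condition translates into the geometric statement that the canonical lattice point of $c$ lies in $D^-$.
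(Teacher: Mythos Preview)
Your proof is correct and follows essentially the same approach as the paper. The forward direction is identical; for the converse, the paper argues by contrapositive and invokes Brauer's dichotomy (Theorem~\ref{thm:equal_brauer}) to conclude that a $c$ not of the form $ab-ax-by$ must lie in $\langle a,b\rangle$, whereas you argue directly and inline the Diophantine analysis from Lemma~\ref{lem:+diophantine} that underlies Brauer's theorem---but this is only a presentational difference, not a different route.
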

\begin{proof}
Assume $c = ab-ax-by$ for $x, y > 0$. Then,
\begin{align*}
ab &= c+ax+by \\
ab-a-b &= (x-1)a+(y-1)b+1\cdot{c} \\
g(a,b) &= (x-1)a+(y-1)b+1\cdot{c}
\end{align*}
where $x-1, y-1 \geq 0$. If $g(a,b,c)$ is equal to $g(a,b)=ab-a-b$, then $g(a,b,c)$ would be an nonnegative integer linear combination of ${a,b,c}$, which contradicts the definition of $g(a,b,c)$. Thus, $g(a,b,c) < g(a,b)$ if $c$ is of the form $ab-ax-by$ for $x,y > 0$. \par
Conversely, suppose $c$ is not of the form $ab-ax-by$, then $g(a,b,c)$ is not less than $g(a,b)$. As shown in Theorem \ref{thm:equal_brauer}, if $c$ is not of the form $ab-ax-by$, then $c$ must be representable in the form $ax+by$ for relatively prime positive integers $a$ and $b$ and $x,y > 0$. Furthermore, since $g(a,b,c)$ is always less than or equal to $g(a,b)$, $g(a,b,c) = g(a,b)$ if $g(a,b,c)$ is not less than $g(a,b)$. Therefore, if $c$ is in form $(\ref{eq:ax+by})$, then $g(a,b,c) = g(a,b)$. If $c$ is a nonnegative linear combination of $a$ and $b$, then a nonnegative linear combination of $a$, $b$, and $c$ is equivalent to a nonnegative linear combination of $a$ and $b$. Thus, by the definition of the Frobenius number, $g(a,b,c) = g(a,b)$.
\end{proof}
Since $c$ must be of the form $ab-ax-by$ for $x,y > 0$ for $g(a,b,c) < g(a,b)$, all the exceptional values $c$ can be visualized as the red dots strictly above the dotted orange line $ax+by = b$ as shown in Figure \ref{fig:GraphSevenNine}, taking into consideration our assumption that $b < c$. Let these lattice points be denoted by $D^*$, the set of lattice points in $D^-$ that lie strictly above the line $ax+by = b$. 
\begin{theorem}\label{thm:exceptional_num}
For relatively prime positive integers $a$ and $b$, there are exactly $\frac{(a-3)(b-1)}{2} + \lfloor \frac{b}{a} \rfloor$ exceptional values.
\end{theorem}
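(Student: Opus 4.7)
The plan is to count $|D^*|$ by first computing the complementary count --- the number $R$ of red lattice points in $D^-$ that lie on or below the orange line $ax+by = b$ --- and then subtracting from $|D^-| = \frac{(a-1)(b-1)}{2}$, obtained in Corollary~\ref{cor:exact_blue_red}. No red point of $D^-$ actually lies on the line $ax+by=b$: solving $ax - bk = b$ with $y = -k \in \{-(a-1), \ldots, -1\}$ and $\gcd(a,b)=1$ forces $k = a-1$ and $x = b$, but $(b, -(a-1))$ is excluded from $D$ by the constraint $x \leq b-1$. Hence the subtraction $|D^-| - R$ counts precisely the red lattice points strictly above the line, i.e.\ the elements of $D^*$.

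To evaluate $R$ I would enumerate $D^-$ one row at a time, fixing $y = -k$ for $k \in \{1, 2, \ldots, a-1\}$. The bad points in row $-k$ are the integers $x$ with $bk < ax \leq b(k+1)$ and $1 \leq x \leq b - 1$. Since $\gcd(a, b) = 1$ forces $bj/a \notin \mathbb{Z}$ for $j \in \{1, \ldots, a-1\}$, the interior rows $k \in \{1, \ldots, a-2\}$ contribute $\lfloor b(k+1)/a \rfloor - \lfloor bk/a \rfloor$ points each, and their sum telescopes to $\lfloor b(a-1)/a \rfloor - \lfloor b/a \rfloor = b - 1 - 2 \lfloor b/a \rfloor$, using $\lfloor b(a-1)/a \rfloor = b - 1 - \lfloor b/a \rfloor$. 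The final row $k = a-1$ behaves differently: its interval is $(b(a-1)/a, b]$, from which $x = b$ must be excluded, so its contribution is $\lfloor b/a \rfloor$. Adding the two contributions yields $R = b - 1 - \lfloor b/a \rfloor$, whence
\[
    |D^*| = \frac{(a-1)(b-1)}{2} - \left( b - 1 - \lfloor b/a \rfloor \right) = \frac{(a-3)(b-1)}{2} + \lfloor b/a \rfloor.
\]

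The main obstacle I expect is the boundary row $k = a-1$: the line $ax + by = b$ hits the excluded vertical line $x = b$ precisely at $(b, -(a-1))$, so this is the only row whose upper interval endpoint $b(k+1)/a$ is an integer, and it requires a one-off adjustment that does not fit the clean telescoping pattern of the interior rows. Once that bookkeeping is handled correctly, the remaining floor-function algebra and the subtraction from $|D^-|$ are mechanical.
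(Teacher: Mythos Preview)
Your proof is correct and shares the paper's overall structure: both start from $|D^-| = \frac{(a-1)(b-1)}{2}$ (Corollary~\ref{cor:exact_blue_red}), both compute the number $R$ of red points with value at most $b$, and both subtract to obtain $|D^*|$. The difference is only in how $R$ is counted. You slice $D^-$ horizontally, bound the bad $x$-interval in each row $y=-k$, telescope the floor sums over $k=1,\ldots,a-2$, and then handle the boundary row $k=a-1$ separately --- exactly the adjustment you anticipated. The paper instead counts by \emph{value}: by the bijection of Lemma~\ref{lem:+diophantine}, each integer $c\in\{1,\ldots,b\}$ corresponds to a unique lattice point of $D$, and among these the non-red (green) ones are precisely $c=b$ (the point $(0,1)$) and the $\lfloor b/a\rfloor$ multiples of $a$ (the points $(1,0),\ldots,(\lfloor b/a\rfloor,0)$), so $R = b - 1 - \lfloor b/a\rfloor$ drops out in one line with no boundary case. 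The paper's route is shorter and sidesteps the row-$k=a-1$ bookkeeping you flagged as the main obstacle; your route is more explicitly geometric and does not appeal to the value--point bijection.
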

\begin{proof}
As noted in Corollary \ref{cor:exact_blue_red}, there are exactly $\frac{(a-1)(b-1)}{2}$ red points. However, we need to exclude all points $(x,y)$ with $f_{[a \ b]}({x,y}) \leq b$ by Definition \ref{def:exceptional_defn}, but we need to not exclude the point $(0,1)$ with a value of $f_{[a \ b]}(0,1) = b$ because it is not a red point. Note that the  "cut-off" for exceptional values can be represented by the equation $ax+by = b$ which will intercept the $x$-axis at $(\frac{b}{a},0)$. So, we need to not exclude the points {$(1,0), (2,0) \cdots (\lfloor \frac{b}{a} \rfloor, 0)$} because they are also not exceptional values. So, by Inclusion-exclusion principle, the final expression for the number of exceptional values for relatively prime integers $a$ and $b$ is
\begin{align*}
\frac{(a-1)(b-1)}{2} - [(b-1)- \lfloor \frac{b}{a} \rfloor] &= \frac{ab-a-b+1}{2} - \frac{2b}{2} + \frac{2}{2} \\
&= \frac{ab-a-3b+3}{2} + \lfloor \frac{b}{a} \rfloor \\ 
&= \frac{(a-3)(b-1)}{2} + \lfloor \frac{b}{a} \rfloor \\
\end{align*}
\qedhere
\end{proof}
\section{Evaluating $g(a,b,c)$}\label{sec:Evaluating g(a,b,c)}
For relatively prime integers $a$, $b$, and $c$, we create a graph to determine $g(a,b,c)$, a graph that reveals a pattern upon careful observation. The figure and its key points/lines are shown below.
  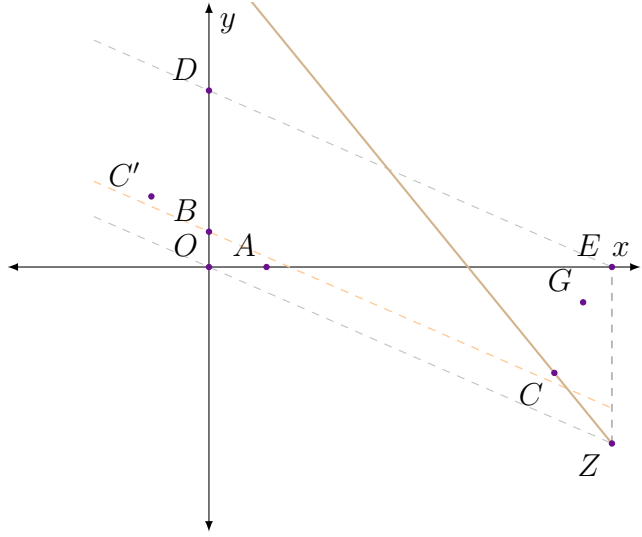
\begin{figure}[H]
\begin{center}    
\begin{tikzpicture}
\begin{axis}[
    axis lines=center,
    xlabel=$x$,
    ylabel=$y$,
    xmin=-3,
    xmax=7,
    ymin=-7,
    ymax=7,
    xtick=\empty, 
    ytick=\empty, 
    grid=both,
    grid style={line width=0.2pt, draw=gray!30},
    major grid style={line width=0.6pt,draw=gray!60},
    ticklabel style={font=\tiny},
    enlargelimits={abs=0.5},
    axis line style={latex-latex},
  ]
  \addplot[
    domain=-10:10,
    samples=2,
    color=black!50,
    dashed
  ] coordinates {(7, -5) (7, 0)};
  \addplot[
    domain=-2:7,
    samples=100,
    color=black!25,
    dashed,
  ]{((35 - 5*x)/7)};
  \addplot[
    domain=-2:7,
    samples=100,
    color=black!25,
    dashed,
  ]{((-5*x)/7)};
  \addplot[
    domain=-2:7,
    samples=100,
    color=brightorange!50,
    dashed,
  ]{((-5*x)/7+1)};
  \addplot[
    domain=-2:7,
    samples=100,
    color=brightorange!50,
    dashed,
  ]{((-5*x)/7+1)};

  \addplot[
    only marks,
    mark=*,
    mark options={color=vividpurple, fill=vividpurple, scale=.5},
  ] coordinates {(0, 0)};
  \node[above left, font=\normalsize] at (axis cs:0,0) {$O$};
  \addplot[
    only marks,
    mark=*,
    mark options={color=vividpurple, fill=vividpurple, scale=.5},
  ] coordinates {(1, 0)};
  \node[above left, font=\normalsize] at (axis cs:1,0) {$A$};
  \addplot[
    only marks,
    mark=*,
    mark options={color=vividpurple, fill=vividpurple, scale=.5},
  ] coordinates {(0, 1)};
  \node[above left, font=\normalsize] at (axis cs:0,1) {$B$};
  \addplot[
    only marks,
    mark=*,
    mark options={color=vividpurple, fill=vividpurple, scale=.5},
  ] coordinates {(7, 0)};
  \node[above left, font=\normalsize] at (axis cs:7,0) {$E$};
  \addplot[
    only marks,
    mark=*,
    mark options={color=vividpurple, fill=vividpurple, scale=.5},
  ] coordinates {(0, 5)};
  \node[above left, font=\normalsize] at (axis cs:0,5) {$D$};
  \addplot[
    only marks,
    mark=*,
    mark options={color=vividpurple, fill=vividpurple, scale=.5},
  ] coordinates {(7, -5)};
  \node[below left, font=\normalsize] at (axis cs:7,-5) {$Z$};
  \addplot[
    only marks,
    mark=*,
    mark options={color=vividpurple, fill=vividpurple, scale=.5},
  ] coordinates {(6.5,-1)};
  \node[above left, font=\normalsize] at (axis cs:6.5,-1) {$G$};
  \addplot[
    only marks,
    mark=*,
    mark options={color=vividpurple, fill=vividpurple, scale=.5},
  ] coordinates {(6,-3)};
  \node[below left, font=\normalsize] at (axis cs:6,-3) {$C$};
  \addplot[
    only marks,
    mark=*,
    mark options={color=vividpurple, fill=vividpurple, scale=.5},
  ] coordinates {(-1,2)};
  \node[above left, font=\normalsize] at (axis cs:-1,2) {$C'$};
  \addplot[domain=-2:7, color=tan, thick]{-2*x+9};  
\end{axis}
\end{tikzpicture}
\end{center}
\caption{General Graph for Section \ref{sec:Evaluating g(a,b,c)}}
  \label{fig:General}
  \end{figure}
\begin{center}
\begin{tabular}{|l|p{8cm}|}
    \hline
    \textbf{Point} & \textbf{$f_{[a \ b]}(x,y)$} \\
    \hline
    $O(0,0)$ & $f_{[a \ b]}(O) = 0$ \\
    \hline
    $A(1,0)$ & $f_{[a \ b]}(A) = a$ \\
    \hline
    $B(0,1)$ & $f_{[a \ b]}(B) = b$ \\
    \hline
    $E(0,b)$, $D(a,0)$ & $f_{[a \ b]}(D) = f_{[a \ b]}(E) = ab$ \\
    \hline
    $C(b-h,l-a)$, $C'(-h,l)$ & $f_{[a \ b]}(C) = f_{[a \ b]}(C')=-ha+lb = c$ for $b < f_{[a \ b]}(C) \leq g(a,b)$. Note that the corresponding values for $C$ and $C'$ are exceptional values. \\
    \hline
    $Z(b,-a)$ & $f_{[a \ b]}(Z) = 0$ \\
    \hline
    $G(b-1,-1)$ & $f_{[a \ b]}(G) = g(a,b)$ by Proposition \ref{prop:f(x,y)_sylvester_2}\\
    \hline
\end{tabular}
\end{center}
We want $X(b-m_0,-n_0)$ among the red lattice points, which has a corresponding value $f_{[a \ b]}({X}) = ab-am_{0}-bn_{0}$ \emph{not} expressible as a nonnegative integer linear combination of relatively prime integers ${a,b,c}$. Thus, $f_{[a \ b]}({X}) \leq g(a,b,c)$. The greatest such value will be equal to $g(a,b,c)$.
Let $ab-m_{0}a-n_{0}b$ be expressible in the form $c_{1}a+c_{2}b+c_{3}c$ where $c = -ha + lb$.
\begin{align*}
ab-m_{0}a-n_{0}b &= c_{1}a+c_{2}b+c_{3}c \\
(-m_0)a + (a-n_0)b &= (c_1 - hc_3)a + (c_2 + lc_3)b
\end{align*}
Thus,
\begin{equation}\label{eq:key_general}
\begin{aligned}
    c_1 - hc_3 &= -m_0 \\
    c_2 + lc_3 &= a - n_0
\end{aligned}
\end{equation}
Further simplifying:
\begin{align*}
    lc_1 - lhc_3 &= -lm_0 \\
    hc_2 - lhc_3 &= (a - n_0)h \\
    \intertext{Summing the equations:}
    lc_1+hc_2 &= ah - (lm_0+hn_0)
\end{align*}
\begin{equation}\label{eq:key_general_sub}
    lc_1+hc_2 = ah - f_{[l \ h]}(m_0,n_0)
\end{equation}
From Equations \ref{eq:key_general_sub}, it's clear that if $ah - f_{[l \ h]}(m_0,n_0)$ is not expressible as a nonnegative integer linear combination of $l$ and $h$, then $X$ is also not expressible as a nonnegative linear combination of $a$, $b$, and $c$. For the various not expressible points in $D^-$, we classify them into three categories.
\begin{enumerate}[label={},itemsep=0pt]
    \item Case 1: If $f_{[l \ h]}(m_0,n_0) > ah$, then $X$ is not expressible as a nonnegative linear combination of ${a,b,c}$ since $lc_1 + hc_2 = ah-f_{[l \ h]}(m_0,n_0) < 0$.
    \item Case 2: $lc_1 + hc_2 = ah-f_{[l \ h]}(m_0,n_0)$ where $ah-f_{[l \ h]}(m_0,n_0)$ is a positive integer that is not expressible as a nonnegative linear combination of ${l,h}$, according to Theorem \ref{thm:sylvester_2}.
    \item Case 3: Equation \ref{eq:key_general_sub} has nonnegative integer solutions for $c_1$ and $c_2$, but Equation \ref{eq:key_general} has no nonnegative integer solution for $c_3$.
\end{enumerate}
Utilizing these cases, we now provide a grand outline for proof going forward.
\begin{center}
\begin{tabular}{|l|p{4cm}|}
    \hline
    $h$ & $a$ \\
    \hline
    $h \equiv 0 \pmod{l}$ & $a \equiv 0 \pmod{l}$ \\
    \hline
    $h \equiv 0 \pmod{l}$ & $a \equiv 1 \pmod{l}$ \\
    \hline
    $h \equiv 0 \pmod{l}$ & $a \not\equiv 0,1 \pmod{l}$ \\
    \hline
    $h \not\equiv 0 \pmod{l}$ & $a \equiv 0 \pmod{l}$ \\
    \hline
    $h \not\equiv 0 \pmod{l}$ & $a \equiv 1 \pmod{l}$ \\
    \hline
    $h \not\equiv 0 \pmod{l}$ & $a \not\equiv 0,1 \pmod{l}$ \\
    \hline
\end{tabular}
\end{center}
\subsection{The Three Cases}
Before we begin, we first elaborate on the three aforementioned cases.
\subsubsection{Case 1}\label{sec:Case 1}
Relatively prime integers ${a,b,c}$ will always have a point $F$ lying below line $CZ$ whose corresponding value is not expressible as a nonnegative integer linear combination of ${a,b,c}$. \par
We characterize the point $F$. As aforementioned, Case 1 occurs when $f_{[l \ h]}(m_0,n_0) > ah$. Therefore, we set up the $u$-$v$ coordinate plane where $u=b-x$ and $v=-y$ i.e. take $E$ as the origin of the $u$-$v$ plane, take the $-x$ direction as the positive $u$ direction, and $-y$ as the positive $v$ direction. The lattice points in the first quadrant outside of the region enclosed by the line $f_{[l \ h]}(m_0,n_0) = ah$, $u$-axis, and $v$-axis are not expressible. The point in the $u$-$v$ plane that will generate the largest $f_{[a \ b]}(F)$ will be $F'(x,1)$ since its counterpart, $F(b-x,-1)$, will be the first lattice point outside of line $CZ$ hit by the line $ax+by=ab$ and its downards translations. More specifically, the value of $m_0 = (\lfloor \frac{ah-h}{l} \rfloor + 1)$ or the next integer strictly above the integer value where the line $lm_{0}+hn_{0} = ah$ intersects $y=1$ as seen below.
\begin{figure}[H]
\begin{center}
    \begin{tikzpicture}[scale=0.8]
    \begin{axis}[
    axis lines=center,
    xlabel=$u$,
    ylabel=$v$,
    xmin=0,
    xmax=9,
    ymin=0,
    ymax=6,
    xticklabel style={above},
    xtick={0,1, ...,9,10,11,12,13,14,15,16,17,18},
    ytick={-7, ..., 7},
    grid=both,
    grid style={line width=0.2pt, draw=gray!30},
    major grid style={line width=0.6pt,draw=gray!60},
    ticklabel style={font=\small},
    enlargelimits={abs=0.5},
    axis line style={latex-latex},
  ]
  \addplot[
    domain=0:9,
    samples=100,
    color=tan,
    thick,
  ]{(6-x)};
   \addplot[only marks, mark=*, mark options={color=red, fill=red, scale=0.5}] coordinates {(1, 1)};   
   \addplot[only marks, mark=*, mark options={color=black, fill=black, scale=0.5}] coordinates {(0, 0)};  
   \addplot[only marks, mark=*, mark options={color=black, fill=black, scale=0.5}] coordinates {(0, 6)}; 
   \addplot[only marks, mark=*, mark options={color=red, fill=red, scale=0.5}] coordinates {(6, 1)}; 
\addplot[only marks, mark=*, mark options={color=red, fill=red, scale=0.5}] coordinates {(2, 4)}; 
   \node[below right, font=\normalsize] at (axis cs:0,6) {$Z'$};
    \node[below right, font=\normalsize] at (axis cs:0,0) {$E'$};
     \node[above right, font=\normalsize] at (axis cs:6,1) {$F'$};
    \node[below left, font=\normalsize] at (axis cs:1,1) {$G'$};
    \node[below left, font=\normalsize] at (axis cs:2,4) {$C'$};
  \end{axis}
  \end{tikzpicture}
    \caption{$f_{[l \ h]}(m_0,n_0)=ah$ for $h=2$, $l=2$}
\end{center}
\end{figure}
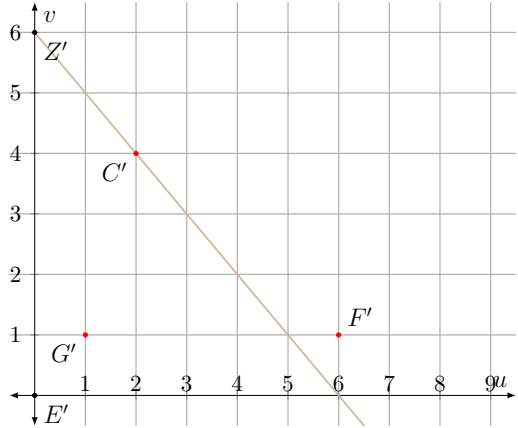
\subsubsection{Case 2}\label{sec: Case 2}
This occurs when $lc_1+hc_2$ is equal to a "small" positive integer that is not expressible as a nonnnegative integer linear combination of $l$ and $h$: $lh - lm_0-hn_0$ for $m_0,n_0 > 0$. Thus, $f_{[l \ h]}(m_0,n_0) = ah-(lh-lm_0-hn_0)$. By the previous equality, Equation \ref{eq:key_general_sub} becomes $lc_1 + hc_2 = ah-f_{[l \ h]}(m_0,n_0) = ah - (ah-(lh-lm_0-hn_0)) = lh - lm_0 - hn_0$. The largest is when $m_0 = n_0 = 1$, but we later observe that this is not always the most optimal at realizing $g(a,b,c)$. Points that fall under Case 2 are labeled as $P$.
\subsubsection{Case 3}\label{sec:Case 3}
Case 3 happens when Equation \ref{eq:key_general_sub} is solvable but Equation \ref{eq:key_general} is not. That is, $c_1$ and $c_2$ are nonnegative integers, but $c_3$ is not a nonnegative integer. We label these points as $Q$ points. This scenario will become more clear as we observe more concrete examples later. 

\subsection{Candidates for Frobenius Number for $h \equiv 0 \pmod{l}$}\label{subsec: h=0}
If $h \equiv 0 \pmod{l}$, then we have the graph $f_{[l \ h]}(m_0,n_0) = lm_0 + hn_0 = ah$, which can be simplified to $m_0 + \frac{h}{l}n_0 = a\frac{h}{l}$ since $h \equiv 0 \pmod{l}$. Therefore, a point in Case 2 does not exist since $g(1,\frac{h}{l})$ does not exist. Let $a$ be representable in the unique form
\begin{equation}
a=ql+r
\end{equation}
where $l$ is the divisor, $q$ is the quotient, and $r$ is the remainder. Note that $q > 0$ since $l < a$. Since there is no $P$ point, we either have a $F$ point or $Q$ point. $F'$ is located at $(\lfloor \frac{ah-h}{l} \rfloor + 1,1)$ by \ref{sec:Case 1}. We now determine the location of $Q'$. We note that there are two possible optimal locations of $Q'$. We label a canonical $Q$ as $Q'_1$, and we label an anomaly $Q$ as $Q'_2$.
\begin{proposition}\label{prop:a/b < l/h}
$-\frac{a}{b} > -\frac{l}{h}$
\end{proposition}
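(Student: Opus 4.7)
The plan is to translate the rational inequality into an integer inequality and then read it off directly from the description of the point $C$ (equivalently $C'$) given in the table in Section \ref{sec:Evaluating g(a,b,c)}. Concretely, multiplying both sides of $-\frac{a}{b} > -\frac{l}{h}$ by $-1$ and then by the positive quantity $bh$ (note $b>0$ always, and $h>0$ because $C'(-h,l)$ is a lattice point with $C(b-h,l-a)\in D^-$, which forces $0<h<b$ and $0<l<a$), the claim reduces to the single inequality $ah<bl$.

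Next I would invoke the definition of $C$. From the table, $f_{[a\ b]}(C')=-ha+lb=c$, and the point $C$ is taken to represent an exceptional value, so in particular $c>b>0$. This gives $lb-ha=c>0$, i.e. $lb>ha$, which is precisely $ah<bl$. Reversing the reduction yields $-\tfrac{a}{b}>-\tfrac{l}{h}$.

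So the whole argument is essentially one line: the positivity of the exceptional value $c = lb - ha$ attached to $C'$ is exactly the slope comparison being asked for. There is no real obstacle; the only things to be careful about are (i) the sign of $h$, which must be positive so that multiplying by $bh$ preserves the direction of the inequality, and (ii) making it explicit that the hypothesis $c>b>0$ (rather than merely $c\ne 0$) is what is being used, so that the argument is consistent with the restriction $b<f_{[a\ b]}(C)\le g(a,b)$ recorded in the table.
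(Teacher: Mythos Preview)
Your proof is correct and follows essentially the same idea as the paper: both arguments reduce the slope inequality to the positivity of $c=-ha+lb$. The paper actually invokes the stronger hypothesis $b<c$ to obtain the sharper intermediate bound $-\tfrac{a}{b}>-\tfrac{l-1}{h}$ before relaxing it to $-\tfrac{l}{h}$, whereas you go directly from $c>0$ to $ah<bl$; your route is a touch more streamlined, but the content is the same.
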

\begin{proof}
By assumption, $b < c = -ha+lb$. This gives us $\frac{b}{a} > \frac{h}{l-1}$. Note that $a,b,c,h$ and $l$ are all positive integers, so $-\frac{a}{b} > -\frac{l-1}{h}$ or $-\frac{a}{b} > -\frac{l}{h}$. Note that if $l \leq 1$ and $l\geq a$, $c$ cannot be an exceptional value, so $1 < l < a$.
\end{proof}
\begin{proposition}\label{prop:Possible_Q_Location}
For $h \equiv 0 \pmod{l}$, there is a $Q$ located at $Q'_1((q-1)h+1,r+1)$ and a possible $Q$ located at $Q'_2(qh+1,1)$.
\end{proposition}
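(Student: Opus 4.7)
The plan is to verify, for each candidate point, the two defining conditions of a Case 3 (or $Q$) point as laid out in Section \ref{sec:Case 3}. These conditions are: (a) $(m_0,n_0)$ lies on or below the boundary line $lm_0+hn_0=ah$, so that Equation \ref{eq:key_general_sub} has nonnegative integer solutions in $c_1,c_2$ (which is automatic under the standing hypothesis $h\equiv 0\pmod l$, since then $ah-f_{[l\,h]}(m_0,n_0)$ is a nonnegative multiple of $l$ and hence expressible using $l$ alone); and (b) the system in Equation \ref{eq:key_general} admits no nonnegative integer $c_3$. Substituting $c_1=hc_3-m_0$ and $c_2=a-n_0-lc_3$ into the nonnegativity constraints on $c_1,c_2$, condition (b) is equivalent to the statement that the interval $[\,m_0/h,\ (a-n_0)/l\,]$ contains no integer, i.e.\ $\lceil m_0/h\rceil>\lfloor(a-n_0)/l\rfloor$.

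For $Q'_1=((q-1)h+1,\,r+1)$ I first compute $m_0/h=(q-1)+1/h$, so $\lceil m_0/h\rceil=q$, and $(a-n_0)/l=(ql-1)/l=q-1/l$, so $\lfloor(a-n_0)/l\rfloor=q-1$. Hence condition (b) holds. For condition (a), a short expansion gives
\[
lm_0+hn_0 \;=\; ah-(lh-l-h),
\]
so the required inequality $lh-l-h\geq 0$ reduces to $(l-1)(h-1)\geq 1$. This follows from $l\geq 2$ (guaranteed by Proposition \ref{prop:a/b < l/h}) together with $h\geq l\geq 2$, so $Q'_1$ is unconditionally a $Q$ point.

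For $Q'_2=(qh+1,\,1)$ the same recipe gives $\lceil m_0/h\rceil=q+1$ and $\lfloor(a-1)/l\rfloor\in\{q-1,q\}$ (distinguishing $r=0$ from $r\geq 1$), so condition (b) again always holds. However condition (a) now yields
\[
lm_0+hn_0 \;=\; ah+\bigl(l-(r-1)h\bigr),
\]
and the required inequality $(r-1)h\geq l$ forces $r\geq 1+l/h$, which for integer $r$ amounts to $r\geq 2$. This is precisely why Proposition \ref{prop:Possible_Q_Location} calls $Q'_2$ only a \emph{possible} $Q$: it qualifies as a genuine Case 3 point when $r\geq 2$, while for $r=1$ it coincides with the point $F'$ from Section \ref{sec:Case 1} and for $r=0$ it sits strictly above the boundary line and has to be treated under Case 1 instead.

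The arithmetic itself is elementary; the main subtlety I anticipate is the boundary bookkeeping needed to ensure that $Q'_1$ and (when applicable) $Q'_2$ actually lie in $D^-$, that is, $1\leq m_0\leq b-1$. For $Q'_1$ this amounts to $b\geq(q-1)h+2$, a bound that must be extracted from the standing assumptions $b<c$ and $c\leq g(a,b)$; tracking this containment cleanly, and being precise about the borderline $l=h=2$ case where $Q'_1$ sits exactly on the boundary line, is the chief hurdle beyond the direct verifications above.
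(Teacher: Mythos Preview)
Your verification is correct and, for the bare statement of the proposition, arguably cleaner than the paper's own argument. You recast non-solvability of \eqref{eq:key_general} as the single inequality $\lceil m_0/h\rceil>\lfloor(a-n_0)/l\rfloor$ and then check it by direct computation at the two candidate points; the paper instead plugs specific trial values of $c_2$ (namely $c_2=l+r-n_0$ and $c_2=r-n_0$) into \eqref{eq:key_general}, solves for $c_3$ and $c_1$, and argues by contradiction that no nonnegative $c_3$ works. Your ceiling/floor criterion is more systematic and makes the ``possible'' qualifier on $Q'_2$ transparent: you see immediately that condition~(a) forces $r\ge 2$, matching the paper's observation that $Q'_2$ falls outside the line $lm_0+hn_0=ah$ when $r\in\{0,1\}$.

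What the paper's longer argument buys, and what your proposal does not attempt, is the surrounding optimality discussion: the paper uses its trial-value substitutions to carve out the rectangular ``blue'' and ``green'' regions where \eqref{eq:key_general} is always solvable, and then argues (via Proposition~\ref{prop:a/b < l/h} and the shift $(u,v)\mapsto(u+h,v-l)$) that no unsolvable point closer to the origin than $Q'_1$ or $Q'_2$ can be hit first by the sweeping line $ax+by=ab$. That material is really preparation for Section~\ref{subsec: value comparisons for h=0} rather than part of the proposition proper, so its absence from your proof is not a gap; but be aware that if you proceed with your approach you will still need those region arguments before the value comparisons make sense. Your closing remark about the containment $1\le m_0\le b-1$ is a legitimate loose end, but the paper does not address it either.
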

\begin{proof}
In equation \ref{eq:key_general}, let $c_2 = l+r-n_0$. Therefore, $c_3 = q-1$, and $c_1 = -m_0 + hq-h$. Since $c_1$ must be greater than or equal to 0, there will always be a solution when $m_0 \leq hq-h$, if $c_2, c_3 \geq 0$. More specifically, if $Q(u,v)$ is solvable for Equation \ref{eq:key_general} for nonnegative integers $c_1,c_2,c_3$, then $Q(u-n,v)$ is solvable for Equation \ref{eq:key_general} for nonnegative integers $c_1+n,c_2,c_3$. Furthermore, $Q(u+hn,v-ln)$ is solvable for Equation \ref{eq:key_general} for nonnegative integers $c_1,c_2,c_3-n$ and that $c_3-n < 0$ only occurs outside of the region bounded by the $m_0$ axis, $n_0$ axis, and $lm_0+hn_0 = ah$. An example can be seen in Figure \ref{fig:h=0,a=0} where the values of $c_1,c_2,c_3$ are in the format $(c_1,c_2,c_3)$. As displayed, $R_3$ is solveable, and so is $R_1$.
\par We now show that $Q'_1((q-1)h+1,r+1)$ is indeed a $Q$ point. From Equation \ref{eq:key_general}, we have:
\begin{align*}
    c_1 - hc_3 &= -(q-1)h-1 \\
    c_2 + lc_3 &= ql-1
\end{align*}
Note that for the equation $c_2 + lc_3 = ql-1$, $c_3$ must be less than $q$, which can be expressed as $c_3 = q-n$ for $n > 0$. Thus, $c_2 = nl-1$ and $c_1 = (-n+1)h-1 < 0$. Therefore, $Q'_1$ is a $Q$ point.
\par We don't need to consider the points in the same triangular region as $Q'_1$ that are above or to the right of it since $Q'_1$ will always be hit first by the line $ax+by = ab$ and its downward parallel translations by observation. Note that if $Q'$ has coordinates $(u,v)$ which is not solvable for Equation \ref{eq:key_general}, then $(u+h, v-l)$ will also not be solvable. We do not need to consider the lattice points that lie on the line with slope $-\frac{l}{h}$ that goes through point $Q'$ since $Q'(u,v)$ will always be hit first by the line $ax+by = ab$ and its downward parallel translations by Proposition \ref{prop:a/b < l/h}. 
\par If we let $c_2 = r-n_0$, we get $c_3 = q$ and $c_1 = qh - m_0 \geq 0$ by following a similar process from before. Thus, we obtain a bound of $m_0 \leq qh$. However, note that the amount of lattice points to the left of $m_0 = qh$ that are solutions is a lot more limited since $c_2 = r-n_0 \geq 0$ as shown by the green region in Figure \ref{fig:h=0,a=1} and Figure \ref{fig:h=0,a=3}. Figure \ref{fig:h=0,a=0} shows the minimum amount of a green region with solutions, which is empty, while Figure \ref{fig:h=0,a=3} shows the maximum amount of green region with solutions: a rectangle with height $r=l-1$ and a width of $qh$. When $a \equiv 0 \pmod l$, this region doesn't exist as shown in Figure \ref{fig:h=0,a=0}. We now show that $Q'_2(qh+1,1)$ is indeed a $Q$ point: 
\begin{align*}
    c_1 - hc_3 &= -qh-1 \\
    c_2 + lc_3 &= a-1
\end{align*}
Note that for the equation $ c_1 - hc_3 = -qh-1$, $c_3$ must be greater than $q$, otherwise $c_1$ is negative. Therefore, if $c_3$ is in the form $q+n$ for $n>0$, then $c_1 = nh-1$ and $c_2 = r-1-ln < 0$. Again, we do not need to consider the points in the same triangular region as $Q'_2$ that are above or the right of it. 
\par It's important to note that $Q'_2(qh+1,1)$ occurs outside of the tan line $lm_0 + hn_0 = ah$ when $a \equiv 0 \pmod{l}$ or $a \equiv 1 \pmod{l}$, and therefore, it should be considered an $F'_2$ point—the $u$ coordinate of $F'$ is $\lfloor \frac{ah-h}{l} \rfloor + 1 = \frac{(al+r)h-h}{l} = qh+\frac{rh}{l}-\frac{h}{l}+1$ which is always less than or equal to the $u$ coordinate of $Q'_{1}$ for $a \equiv 0 \pmod{l}$ or $a \equiv 1 \pmod{l}$. However, we already have the most optimal $F'$, so we can neglect these scenarios. An example can be seen in Figure \ref{fig:h=0,a=0} and Figure \ref{fig:h=0,a=1}. Otherwise, we must consider $Q'_2$ when $a \not\equiv 0,1 \pmod{l}, h \equiv 0 \pmod{l}$, which we do in Section \ref{subsubsec:h=0,a=2+}.
\end{proof}
  \begin{figure}[H]
\begin{center}    
\begin{minipage}{0.33\textwidth}
\begin{tikzpicture}[scale=0.6]
\begin{axis}[
    axis lines=center,
    xlabel=$m_0$,
    ylabel=$n_0$,
    xmin=0,
    xmax=14,
    ymin=0,
    ymax=13,
    xticklabel style={above},
    xtick={0,1, ...,9,10,11,12,13,14},
    ytick={0,1, ...,9,10,11,12,13},
    grid=both,
    grid style={line width=0.2pt, draw=gray!30},
    major grid style={line width=0.6pt,draw=gray!60},
    ticklabel style={font=\tiny},
    enlargelimits={abs=0.5},
    axis line style={latex-latex},
  ]
  \addplot[
    domain=-10:10,
    samples=2,
    color=red,
    thick
  ] coordinates {(0, 8) (0,12)};
  \addplot[
    domain=-10:10,
    samples=2,
    color=red,
    thick
  ] coordinates {(0, 8) (4,8)};
  \addplot[
    domain=-10:10,
    samples=2,
    color=red,
    thick
  ] coordinates {(4, 8) (4,4)};
  \addplot[
    domain=-10:10,
    samples=2,
    color=red,
    thick
  ] coordinates {(4, 4) (8,4)};
  \addplot[
    domain=-10:10,
    samples=2,
    color=red,
    thick
  ] coordinates {(8, 4) (8,0)};
  \addplot[
    domain=-10:10,
    samples=2,
    color=red,
    thick
  ] coordinates {(2, 2) (6,2)};
  \addplot[
    domain=-10:10,
    samples=2,
    color=red,
    thick
  ] coordinates {(2, 6) (2,2)};
  \addplot[
    domain=-10:10,
    samples=2,
    color=red,
    thick
  ] coordinates {(8, 0) (12,0)};
  \addplot[
    only marks,
    mark=*,
    mark options={color=vividpurple, fill=vividpurple, scale=.5},
  ] coordinates {(9,1)};
  \node[above right, font=\normalsize] at (axis cs:9,1) {$Q'_1$};
  \addplot[
    only marks,
    mark=*,
    mark options={color=vividpurple, fill=vividpurple, scale=.5},
  ] coordinates {(5,5)};
  \node[above right, font=\normalsize] at (axis cs:5,5) {$Q'_{0}$};
  \addplot[
    only marks,
    mark=*,
    mark options={color=vividpurple, fill=vividpurple, scale=.5},
  ] coordinates {(1,9)};
  \node[above right, font=\normalsize] at (axis cs:1,9) {$Q'_{-1}$};
  \addplot[domain=0:12, color=tan, thick]{-x+12};  
  \addplot[domain=0:12, color=vividpurple, dashed]{-x+10};
  \filldraw[blue!20, opacity=0.5] (axis cs:0, 0) rectangle (axis cs:8, 4);
  \draw[->, thick] (axis cs:8, 7) -- (axis cs:8, 4);
  \node[above, font=\normalsize] at (axis cs:8, 7) {$hq-h$};

  \addplot[
    only marks,
    mark=*,
    mark options={color=vividpurple, fill=vividpurple, scale=.5},
  ] coordinates {(6,2)};
  \addplot[
    only marks,
    mark=*,
    mark options={color=vividpurple, fill=vividpurple, scale=.5},
  ] coordinates {(2,6)};
  \addplot[
    only marks,
    mark=*,
    mark options={color=vividpurple, fill=vividpurple, scale=.5},
  ] coordinates {(13,1)};
  \node[above right, font=\normalsize] at (axis cs:13,1) {$Q'_{2}$};
   \addplot[
    only marks,
    mark=*,
    mark options={color=vividpurple, fill=vividpurple, scale=.5},
  ] coordinates {(12,1)};
  \node[above left, font=\normalsize] at (axis cs:13,1) {$F'$};
  \addplot[
    only marks,
    mark=*,
    mark options={color=vividpurple, fill=vividpurple, scale=.5},
  ] coordinates {(4,2)};
  \node[above, font=\normalsize] at (axis cs:2,7) {$R_1$};
  \node[above, font=\normalsize] at (axis cs:4,2) {$R_2$};
  \node[above, font=\normalsize] at (axis cs:6,2) {$R_3$};
  \node[above, font=\small] at (axis cs:2,6) {$(2,2,1)$};
  \node[below, font=\small] at (axis cs:4,2) {$(4,2,2)$};
  \node[below, font=\small] at (axis cs:6,2) {$(2,2,2)$};
\end{axis}
\end{tikzpicture}
\caption{$h = 4\equiv 0, a =12 \equiv 0 \pmod{l=4}$}
\label{fig:h=0,a=0}
\end{minipage}\hspace{1cm}
\begin{minipage}{0.33\textwidth}
\begin{tikzpicture}[scale=0.6]
\begin{axis}[
    axis lines=center,
    xlabel=$m_0$,
    ylabel=$n_0$,
    xmin=0,
    xmax=15,
    ymin=0,
    ymax=13,
    xticklabel style={above},
    xtick={0,1, ...,9,10,11,12,13,14,15},
    ytick={0,1, ...,9,10,11,12,13},
    grid=both,
    grid style={line width=0.2pt, draw=gray!30},
    major grid style={line width=0.6pt,draw=gray!60},
    ticklabel style={font=\tiny},
    enlargelimits={abs=0.5},
    axis line style={latex-latex},
  ]
  \addplot[
    domain=-10:10,
    samples=2,
    color=red,
    thick
  ] coordinates {(0, 9) (0,13)};
  \addplot[
    domain=-10:10,
    samples=2,
    color=red,
    thick
  ] coordinates {(0, 9) (4,9)};
  \addplot[
    domain=-10:10,
    samples=2,
    color=red,
    thick
  ] coordinates {(4, 9) (4,5)};
  \addplot[
    domain=-10:10,
    samples=2,
    color=red,
    thick
  ] coordinates {(4, 5) (8,5)};
  \addplot[
    domain=-10:10,
    samples=2,
    color=red,
    thick
  ] coordinates {(8, 1) (8,5)};
  \addplot[
    domain=-10:10,
    samples=2,
    color=red,
    thick
  ] coordinates {(8, 1) (12,1)};
  \addplot[
    only marks,
    mark=*,
    mark options={color=vividpurple, fill=vividpurple, scale=.5},
  ] coordinates {(9,2)};
  \node[above right, font=\normalsize] at (axis cs:9,2) {$Q'_1$};
  \addplot[
    only marks,
    mark=*,
    mark options={color=vividpurple, fill=vividpurple, scale=.5},
  ] coordinates {(5,6)};
  \node[above right, font=\normalsize] at (axis cs:5,6) {$Q'_{0}$};
  \addplot[
    only marks,
    mark=*,
    mark options={color=vividpurple, fill=vividpurple, scale=.5},
  ] coordinates {(1,10)};
  \node[above right, font=\normalsize] at (axis cs:1,10) {$Q'_{-1}$};
  \addplot[domain=0:13, color=tan, thick]{-x+13};  
  \addplot[domain=0:13, color=vividpurple, dashed]{-x+11};
  \filldraw[blue!20, opacity=0.5] (axis cs:0, 1) rectangle (axis cs:8, 5);
  \draw[->, thick] (axis cs:8, 8) -- (axis cs:8, 5);
  \node[above, font=\normalsize] at (axis cs:8,8) {$qh-h$};
   \filldraw[green!20, opacity=0.5] (axis cs:0, 0) rectangle (axis cs:12,1);
  \draw[->, thick] (axis cs:12,4) -- (axis cs:12,1);
  \node[above, font=\normalsize] at (axis cs:12,4) {$qh$};
  \addplot[
    only marks,
    mark=*,
    mark options={color=vividpurple, fill=vividpurple, scale=.5},
  ] coordinates {(13,1)};
  \node[above right, font=\normalsize] at (axis cs:13,1) {$F' (Q'_{2})$};

\end{axis}
\end{tikzpicture}
\caption{$h = 4 \equiv 0, a = 13 \equiv 1 \pmod{l=4}$}
\label{fig:h=0,a=1}
\end{minipage}\hspace{1cm}
\begin{minipage}{0.33\textwidth}
\begin{tikzpicture}[scale=0.6]
\begin{axis}[
    axis lines=center,
    xlabel=$m_0$,
    ylabel=$n_0$,
    xmin=0,
    xmax=16,
    ymin=-1,
    ymax=16,
    xticklabel style={above},
    xtick={0,1, ...,9,10,11,12,13,14,15,16},
    ytick={-1,0,1, ...,9,10,11,12,13,14,15,16},
    grid=both,
    grid style={line width=0.2pt, draw=gray!30},
    major grid style={line width=0.6pt,draw=gray!60},
    ticklabel style={font=\tiny},
    enlargelimits={abs=0.5},
    axis line style={latex-latex},
  ]
  \addplot[
    domain=-10:10,
    samples=2,
    color=red,
    thick
  ] coordinates {(0, 11) (0,15)};
  \addplot[
    domain=-10:10,
    samples=2,
    color=red,
    thick
  ] coordinates {(0, 11) (4,11)};
  \addplot[
    domain=-10:10,
    samples=2,
    color=red,
    thick
  ] coordinates {(4, 11) (4,7)};
  \addplot[
    domain=-10:10,
    samples=2,
    color=red,
    thick
  ] coordinates {(4, 7) (8,7)};
  \addplot[
    domain=-10:10,
    samples=2,
    color=red,
    thick
  ] coordinates {(8, 7) (8,3)};
  \addplot[
    domain=-10:10,
    samples=2,
    color=red,
    thick
  ] coordinates {(8, 3) (12,3)};
   \addplot[
    domain=-10:10,
    samples=2,
    color=red,
    thick
  ] coordinates {(12, -1) (12,3)};
  \addplot[
    domain=-10:10,
    samples=2,
    color=red,
    thick
  ] coordinates {(12, -1) (16,-1)};
  \addplot[
    only marks,
    mark=*,
    mark options={color=vividpurple, fill=vividpurple, scale=.5},
  ] coordinates {(9,4)};
  \node[above right, font=\normalsize] at (axis cs:9,4) {$Q'_1$};
  \addplot[
    only marks,
    mark=*,
    mark options={color=vividpurple, fill=vividpurple, scale=.5},
  ] coordinates {(5,8)};
  \node[above right, font=\normalsize] at (axis cs:5,8) {$Q'_{0}$};
  \addplot[
    only marks,
    mark=*,
    mark options={color=vividpurple, fill=vividpurple, scale=.5},
  ] coordinates {(1,12)};
  \node[above right, font=\normalsize] at (axis cs:1,12) {$Q'_{-1}$};
  \addplot[domain=0:16, color=tan, thick]{-x+15};  
  \addplot[domain=0:16, color=vividpurple, dashed]{-x+13};
  \filldraw[blue!20, opacity=0.5] (axis cs:0, 3) rectangle (axis cs:8, 7);
  \draw[->, thick] (axis cs:8, 10) -- (axis cs:8, 7);
  \node[above, font=\normalsize] at (axis cs:8,10) {$qh-h$};
   \filldraw[green!20, opacity=0.5] (axis cs:0, 0) rectangle (axis cs:12,3);
  \draw[->, thick] (axis cs:12,6) -- (axis cs:12,3);
  \node[above, font=\normalsize] at (axis cs:12,6) {$qh$};
  \addplot[
    only marks,
    mark=*,
    mark options={color=vividpurple, fill=vividpurple, scale=.5},
  ] coordinates {(13,1)};
  \node[above right, font=\normalsize] at (axis cs:13,1) {$Q'_{2}$};
  \addplot[
    only marks,
    mark=*,
    mark options={color=vividpurple, fill=vividpurple, scale=.5},
  ] coordinates {(15,1)};
  \node[above right, font=\normalsize] at (axis cs:15,1) {$F'$};
\end{axis}
\end{tikzpicture}
\caption{$h = 4 \equiv 0, a = 15 \equiv 3 \pmod{l=4}$}
\label{fig:h=0,a=3}
\end{minipage}
\end{center}
  \end{figure}\label{fig:h=0}
\subsection{Value Comparisons for $h \equiv 0 \pmod{l}$}\label{subsec: value comparisons for h=0}
For each of the points $F'$, $Q'_{1}$, and $Q'_{2}$, the one with the smallest corresponding value in the $u$-$v$ plane $f_{[a \ b]}({u,v})$, and thus the greatest corresponding value in the $x$-$y$ plane $f_{[a \ b]}({x,y})$ is highly dependent on the slope $-\frac{a}{b}$. For example, in Figure \ref{fig:h=0,a=1} or the case $a \equiv 1 \pmod{l}$, if $-\frac{a}{b}$ is close to $-1$, $Q'_1$ will clearly be hit before $F'$ with $ax+by=ab$ and its downward parallel translations. However, if $-\frac{a}{b}$ approaches 0, then the lattice point $F'$ will be hit first. A similar argument can be made between $Q'_{1}$ and $Q'_{2}$ when $a \not\equiv 0,1 \pmod {l}$. Therefore, it's necessary to determine what values for $a$ and $b$ determine the switch from $F'$ to $Q'$ or $Q'_{1}$ to $Q'_{2}$. We can determine these conditions by analyzing the corresponding values at each possible candidate for $g(a,b,c)$.
\subsubsection{$h \equiv 0 \pmod{l},a \equiv 0 \pmod {l}$}\label{subsubsec:h=0,a=0}
In Proposition \ref{prop:Possible_Q_Location}, we noted that for $a \equiv 0 \pmod {l}$ or $a \equiv 1 \pmod {l}$, $Q'_2$ lies outside the line $lm_0 + hn_0 = ah$ and need not be considered. So, we only need to consider $Q'_1$ and $F'$. $Q'_{1}$ is located at $((q-1)h+1,r+1) = ((q-1)h+1,1)$ since $r=0$. $F'$ is located at $(\lfloor \frac{ah-h}{l} \rfloor +1 , 1) = (qh - \frac{h}{l}+1,1)$ since $(h \equiv 0 \pmod {l})$. Therefore, $Q'_1$ will always have a smaller value than $F'$, and $Q_1$ always has a greater value than $F$. This is the most trivial case.
\subsubsection{$h \equiv 0 \pmod{l},a \equiv 1 \pmod {l}$}
Following a similar process to Section \ref{subsubsec:h=0,a=0}, we consider the values of $Q'_1$ and $F'$:
\begin{align}\label{eq: h=0, a=1}
 f_{[a \ b]}(Q'_1) &\substack{>\\<}   f_{[a \ b]}(F') \notag \\
    a((q-1)h+1) + b(r+1) &\substack{>\\<} a(\frac{ah-h}{l}+1) + b(1) \notag \\
    aqh-ah+a+br+b &\substack{>\\<} \frac{a^2h}{l}-\frac{ah}{l}+a+b \notag \\
    ah(a-r)-ahl+al+brl &\substack{>\\<} a^2h-ah+al \notag \\
    -arh-alh+brl &\substack{>\\<} -ah \notag \\
    b &\substack{>\\<} \frac{ah(r+l-1)}{lr} = ah 
\end{align}
Note that $b \neq \frac{ah(r+l-1)}{lr} = ah$, which would mean $b$ being a multiple of $a$, which contradicts the initial conditions. If the left-hand-side is greater than 0, it's $F'$, otherwise, it's $Q'_1$. To illustrate, we use the numbers indicated in Figure \ref{fig:h=0,a=1}. Here, $r=1$, $l=4$, $a = 13$, and $h=4$. Subbing into Inequality \ref{eq: h=0, a=1}, we have $b \substack{>\\<} 13*4 = 52$. Suppose $b = 51$, then it's a $Q'_1$. Remember that $c = -ha + lb$. Indeed $g(13,51,152) = ab - f_{[a \ b]}(Q'_1) = 444$. In this case, $Q'_1$ represents the Frobenius number. If $b = 53$, then $g(a,b,c) = g(13,53,160) = ab - f_{[a \ b]}(F') = 467$. In this case, $F'$ represents the Frobenius number. 
\subsubsection{$h \equiv 0 \pmod{l}, a \not\equiv 0, 1 \pmod{l}$}\label{subsubsec:h=0,a=2+}
We now compare to $Q'_1$ to $Q'_2$ when $a \not\equiv 0, 1 \pmod{l}$. Note that the corresponding value of $Q'_2(qh+1,1)$ is always less than $F'(\lfloor \frac{ah-h}{l}\rfloor +1) = (\frac{(ql+r)h-h}{l}+1, 1) = (qh + \frac{rh}{l}-\frac{h}{l}+1)$ for $ a \not\equiv 0, 1 \pmod{l}$. We now compare the values of $Q'_1$ and $Q'_2$:
\begin{align}\label{eq:h=0,a=2+}
f_{[a \ b]}(Q'_1) &\substack{>\\<}   f_{[a \ b]}(Q'_2) \notag \\
a((q-1)h+1) + b(r+1) &\substack{>\\<}   a(qh+1)+b(1) \notag \\
-ah+br &\substack{>\\<} 0 \notag \\
b &\substack{>\\<} \frac{ah}{r}
\end{align}
Note that if $b = \frac{ah}{r}$, $c$ is not an exceptional value. If $b > \frac{ah}{r}$, then it's a $Q'_2$ point. For example, let $r=3,l=6, a=9, h=12$. Subbing into Inequality \ref{eq:h=0,a=2+}, we have $b \substack{>\\<} 36$. Suppose $b = 37$, then $g(9,37,114) = ab - f_{[a \ b]}(Q'_2) = ab-f_{[a \ b]}(13,1)=179$. On the other hand, if $b < 36$, say 35, then $Q'_1$ is the most optimal candidate for $g(a,b,c)$: $g(9,35,102) = ab - f_{[a \ b]}(Q'_1) = ab-f_{[a \ b]}(1,4)=166$.
\subsection{Candidates and Value Comparisons for Frobenius Number for $h \not\equiv 0 \pmod{l}$}
\subsubsection{$h \not\equiv 0 \pmod{l}$, \rm{gcd}$(l,h) = 1$}\label{subsubsec: h = 1+, gcd(l,h) = 1}
Unlike before where Equation \ref{eq:key_general_sub} could be simplified into $m_0 + \frac{h}{l} = \frac{ah}{l}$ since $h \equiv 0 \pmod{l}$, we now observe what happens when $h$ is not a multiple of $l$. In the case where $g(l,h) = 1$, observe that we are now dealing with $P'$ points. For example, the most optimal lattice point might be located on the line $lm_0 + hn_0 = ah-f_{[l \ h]}(m_0,n_0)$ is not expressible as a nonnegative integer linear combination of $l$ and $h$. More specifically, $f_{[l \ h]}(m_0,n_0)$ has to be of the form $ah-(lh - xl - yh)$ so that $lm_0 + hn_0 = lh - xl -yh$ for $x,y>0$ and $lx+hy < lh$. We now specify the location of $P'$
\begin{align*}
    hl - xl -yh &= ah-f_{[l \ h]}(m_0,n_0)  \\
    -ah+lh-xl-yh &= -lm_0-hn_0 \\
    \intertext{This implies that}
    n_0 &= a-l+y \\
    m_0 &= x
\end{align*}
So, our point should be of the form $(x,a-l+y)$.  $(x,y)$ is the set of blue lattice points displayed in the figures below and we are simply increasing the values by $(a-l)h$ or $v$ coordinate by $a-l$. This is most optimal when $x,y=1$; however, like the $Q'$ case, we have suboptimal points: the other blue points in the region bounded by the line $lm_0 + hn_0 = lh$. So, all the lattice points in the top left triangular region are all considered $P$ points.
\par A lot of the properties we observed in \ref{subsec: h=0} can also be applied here. First, the aforementioned bounds where there are always solutions are the same. Second, if $(u,v)$ does not have a solution, then $(u+h,v-l)$ will also have no solution. So, for the case $g(l,h) = 1$, the locations of $P'$ and $Q'$ are the same, including the anomalies. That is, $P'_1((q-1)h+1,r+1)$ and $P'_2(qh+1,1)$. In other words, $P'$ points can be thought of as a subset of $Q'$ points where there are not nonnegative integer solutions for $c_1$ and $c_2$ in Equation \ref{eq:key_general_sub}. Therefore, the candidates and conditions are the same for $a \equiv 0, 1 \pmod{l}$ and $a \not\equiv 0,1 \pmod{l}$. Some examples are illustrated below; however, they're labeled as $P'$ points instead of $Q'$ points. 
\begin{figure}[H]
\begin{center}    
\begin{minipage}{0.33\textwidth}
\begin{tikzpicture}[scale=0.7]
\begin{axis}[
    axis lines=center,
    xlabel=$m_0$,
    ylabel=$n_0$,
    xmin=0,
    xmax=15,
    ymin=-3,
    ymax=9,
    xticklabel style={above},
    xtick={0,1, ...,9,10,11,12,13,14},
    ytick={-3,-2,-1, 0,1, ...,9,10,11,12,13,14,15},
    grid=both,
    grid style={line width=0.2pt, draw=gray!30},
    major grid style={line width=0.6pt,draw=gray!60},
    ticklabel style={font=\tiny},
    enlargelimits={abs=0.5},
    axis line style={latex-latex},
  ]
  \addplot[
    domain=-10:10,
    samples=2,
    color=red,
    thick
  ] coordinates {(0,9) (0,6)};
  \addplot[
    domain=-10:10,
    samples=2,
    color=red,
    thick
  ] coordinates {(0, 6) (5,6)};
  \addplot[
    domain=-10:10,
    samples=2,
    color=red,
    thick
  ] coordinates {(5,6) (5,3)};
  \addplot[
    domain=-10:10,
    samples=2,
    color=red,
    thick
  ] coordinates {(5,3) (10,3)};
  \addplot[
    domain=-10:10,
    samples=2,
    color=red,
    thick
  ] coordinates {(10,3) (10,0)};
  \addplot[
    domain=-10:10,
    samples=2,
    color=red,
    thick
  ] coordinates {(10,0) (15,0)};
  \addplot[
    only marks,
    mark=*,
    mark options={color=vividpurple, fill=vividpurple, scale=.5},
  ] coordinates {(11,1)};
  \node[above right, font=\normalsize] at (axis cs:11,1) {$P'_1$};
  \addplot[
    only marks,
    mark=*,
    mark options={color=vividpurple, fill=vividpurple, scale=.5},
  ] coordinates {(6,4)};
  \node[above right, font=\normalsize] at (axis cs:6,4) {$P'_{0}$};
  \addplot[
    only marks,
    mark=*,
    mark options={color=vividpurple, fill=vividpurple, scale=.5},
  ] coordinates {(1,7)};
  \node[above right, font=\normalsize] at (axis cs:1,7) {$P'_{-1}$};
  \addplot[domain=0:15, color=tan, thick]{-3/5*x+9};  
  \addplot[domain=0:15, color=vividpurple, dashed]{-3/5*x+7.4};
  \addplot[domain=0:5, color=black, thick]{-3/5*x+3};
  \addplot[domain=0:5, color=black, thick]{-3/5*x};
  \addplot[
    domain=-10:10,
    samples=2,
    color=black,
    thick
  ] coordinates {(5,0) (5,-3)};
  \filldraw[blue!20, opacity=0.5] (axis cs:0, 0) rectangle (axis cs:10,3);
  \draw[->, thick] (axis cs:10,5) -- (axis cs:10,3);
  \node[above, font=\normalsize] at (axis cs:10,5) {$hq-h$};
   \addplot[
    only marks,
    mark=*,
    mark options={color=vividpurple, fill=vividpurple, scale=.5},
  ] coordinates {(14,1)};
  \node[above left, font=\normalsize] at (axis cs:14,1) {$F'$};
\foreach \i in {2,...,4} {
    \foreach \j in {-1}{
        \addplot[only marks, mark=*, mark options={color=red, fill=red, scale=0.5}] coordinates {(\i, \j)};        
    }
}
\foreach \i in {4} {
    \foreach \j in {-2}{
        \addplot[only marks, mark=*, mark options={color=red, fill=red, scale=0.5}] coordinates {(\i, \j)};        
    }
}
\foreach \i in {2,...,4} {
    \foreach \j in {-1}{
        \addplot[only marks, mark=*, mark options={color=red, fill=red, scale=0.5}] coordinates {(\i, \j)};        
    }
}
\foreach \i in {1,...,3} {
    \foreach \j in {1}{
        \addplot[only marks, mark=*, mark options={color=blue, fill=blue, scale=0.5}] coordinates {(\i, \j)};        
    }
}
\foreach \i in {1} {
    \foreach \j in {2}{
        \addplot[only marks, mark=*, mark options={color=blue, fill=blue, scale=0.5}] coordinates {(\i, \j)};        
    }
}
\draw[->, thin] (axis cs:4,-1) -- (axis cs:1,7);
\end{axis}
\end{tikzpicture}
\caption{$h = 5\equiv 2, a =9 \equiv 0 \pmod{l=3}$}
\label{fig:h=1+,a=0}
\end{minipage}\hspace{1cm}
\begin{minipage}{0.33\textwidth}
\begin{tikzpicture}[scale=0.7]
\begin{axis}[
    axis lines=center,
    xlabel=$m_0$,
    ylabel=$n_0$,
    xmin=0,
    xmax=17,
    ymin=-3,
    ymax=10,
    xticklabel style={above},
    xtick={0,1, ...,9,10,11,12,13,14,15,16,17},
    ytick={-3,-2,-1, 0,1, ...,9,10,11,12,13,14,15},
    grid=both,
    grid style={line width=0.2pt, draw=gray!30},
    major grid style={line width=0.6pt,draw=gray!60},
    ticklabel style={font=\tiny},
    enlargelimits={abs=0.5},
    axis line style={latex-latex},
  ]
  \addplot[
    domain=-10:10,
    samples=2,
    color=red,
    thick
  ] coordinates {(0,10) (0,7)};
  \addplot[
    domain=-10:10,
    samples=2,
    color=red,
    thick
  ] coordinates {(0,7) (5,7)};
  \addplot[
    domain=-10:10,
    samples=2,
    color=red,
    thick
  ] coordinates {(5,7) (5,4)};
  \addplot[
    domain=-10:10,
    samples=2,
    color=red,
    thick
  ] coordinates {(5,4) (10,4)};
  \addplot[
    domain=-10:10,
    samples=2,
    color=red,
    thick
  ] coordinates {(10,4) (10,1)};
  \addplot[
    domain=-10:10,
    samples=2,
    color=red,
    thick
  ] coordinates {(10,1) (15,1)};
  \addplot[
    only marks,
    mark=*,
    mark options={color=vividpurple, fill=vividpurple, scale=.5},
  ] coordinates {(11,2)};
  \node[above right, font=\normalsize] at (axis cs:11,2) {$P'_1$};
  \addplot[
    only marks,
    mark=*,
    mark options={color=vividpurple, fill=vividpurple, scale=.5},
  ] coordinates {(6,5)};
  \node[above right, font=\normalsize] at (axis cs:6,5) {$P'_{0}$};
  \addplot[
    only marks,
    mark=*,
    mark options={color=vividpurple, fill=vividpurple, scale=.5},
  ] coordinates {(1,8)};
  \node[above right, font=\normalsize] at (axis cs:1,8) {$P'_{-1}$};
  \addplot[domain=0:17, color=tan, thick]{-3/5*x+10};  
  \addplot[domain=0:14.5, color=vividpurple, dashed]{-3/5*x+8.6};
  \addplot[domain=0:5, color=black, thick]{-3/5*x+3};
  \addplot[domain=0:5, color=black, thick]{-3/5*x};
  \addplot[
    domain=-10:10,
    samples=2,
    color=black,
    thick
  ] coordinates {(5,0) (5,-3)};
  \filldraw[blue!20, opacity=0.5] (axis cs:0, 1) rectangle (axis cs:10,4);
  \draw[->, thick] (axis cs:10,6) -- (axis cs:10,4);
  \node[above, font=\normalsize] at (axis cs:10,6) {$hq-h$};
  \draw[->, thick] (axis cs:15,3) -- (axis cs:15,1);
  \node[above, font=\normalsize] at (axis cs:15,3) {$qh$};
  \filldraw[green!20, opacity=0.5] (axis cs:0, 0) rectangle (axis cs:15,1);
   \addplot[
    only marks,
    mark=*,
    mark options={color=vividpurple, fill=vividpurple, scale=.5},
  ] coordinates {(16,1)};
  \node[above right, font=\normalsize] at (axis cs:16,1) {$F'$};
\foreach \i in {2,...,4} {
    \foreach \j in {-1}{
        \addplot[only marks, mark=*, mark options={color=red, fill=red, scale=0.5}] coordinates {(\i, \j)};        
    }
}
\foreach \i in {4} {
    \foreach \j in {-2}{
        \addplot[only marks, mark=*, mark options={color=red, fill=red, scale=0.5}] coordinates {(\i, \j)};        
    }
}
\foreach \i in {2,...,4} {
    \foreach \j in {-1}{
        \addplot[only marks, mark=*, mark options={color=red, fill=red, scale=0.5}] coordinates {(\i, \j)};        
    }
}
\foreach \i in {1,...,3} {
    \foreach \j in {1}{
        \addplot[only marks, mark=*, mark options={color=blue, fill=blue, scale=0.5}] coordinates {(\i, \j)};        
    }
}
\foreach \i in {1} {
    \foreach \j in {2}{
        \addplot[only marks, mark=*, mark options={color=blue, fill=blue, scale=0.5}] coordinates {(\i, \j)};        
    }
}

\draw[->, thin] (axis cs:4,-1) -- (axis cs:1,8);
\end{axis}
\end{tikzpicture}
\caption{$h = 5\equiv 2, a = 10 \equiv 1 \pmod{l=3}$}
\label{fig:h=1+,a=1}
\end{minipage}\hspace{1cm}
\begin{minipage}{0.33\textwidth}
\begin{tikzpicture}[scale=0.7]
\begin{axis}[
     axis lines=center,
    xlabel=$m_0$,
    ylabel=$n_0$,
    xmin=0,
    xmax=20,
    ymin=-3,
    ymax=11,
    xticklabel style={above},
    xtick={0,1, ...,9,10,11,12,13,14,15,16,17,18,19,20},
    ytick={-3,-2,-1, 0,1, ...,9,10,11,12,13,14,15},
    grid=both,
    grid style={line width=0.2pt, draw=gray!30},
    major grid style={line width=0.6pt,draw=gray!60},
    ticklabel style={font=\tiny},
    enlargelimits={abs=0.5},
    axis line style={latex-latex},
  ]
  \addplot[
    domain=-10:10,
    samples=2,
    color=red,
    thick
  ] coordinates {(0,11) (0,8)};
  \addplot[
    domain=-10:10,
    samples=2,
    color=red,
    thick
  ] coordinates {(0,8) (5,8)};
  \addplot[
    domain=-10:10,
    samples=2,
    color=red,
    thick
  ] coordinates {(5,8) (5,5)};
  \addplot[
    domain=-10:10,
    samples=2,
    color=red,
    thick
  ] coordinates {(5,5) (10,5)};
  \addplot[
    domain=-10:10,
    samples=2,
    color=red,
    thick
  ] coordinates {(10,5) (10,2)};
  \addplot[
    domain=-10:10,
    samples=2,
    color=red,
    thick
  ] coordinates {(10,2) (15,2)};
  \addplot[
    domain=-10:10,
    samples=2,
    color=red,
    thick
  ] coordinates {(15,2) (15,-1)};
  \addplot[
    domain=-10:10,
    samples=2,
    color=red,
    thick
  ] coordinates {(15,-1) (20,-1)};
  \addplot[
    only marks,
    mark=*,
    mark options={color=vividpurple, fill=vividpurple, scale=.5},
  ] coordinates {(11,3)};
  \node[above right, font=\normalsize] at (axis cs:11,3) {$P'_1$};
  \addplot[
    only marks,
    mark=*,
    mark options={color=vividpurple, fill=vividpurple, scale=.5},
  ] coordinates {(6,6)};
  \node[above right, font=\normalsize] at (axis cs:6,6) {$P'_{0}$};
  \addplot[
    only marks,
    mark=*,
    mark options={color=vividpurple, fill=vividpurple, scale=.5},
  ] coordinates {(1,9)};
  \node[above right, font=\normalsize] at (axis cs:1,9) {$P'_{-1}$};
  \addplot[domain=0:20, color=tan, thick]{-3/5*x+11};  
  \addplot[domain=0:17.5, color=vividpurple, dashed]{-3/5*x+9.6};
  \addplot[domain=0:5, color=black, thick]{-3/5*x+3};
  \addplot[domain=0:5, color=black, thick]{-3/5*x};
  \addplot[
    domain=-10:10,
    samples=2,
    color=black,
    thick
  ] coordinates {(5,0) (5,-3)};
  \filldraw[blue!20, opacity=0.5] (axis cs:0, 2) rectangle (axis cs:10,5);
  \draw[->, thick] (axis cs:10,7) -- (axis cs:10,5);
  \node[above, font=\normalsize] at (axis cs:10,7) {$hq-h$};
  \draw[->, thick] (axis cs:15,4) -- (axis cs:15,2);
  \node[above, font=\normalsize] at (axis cs:15,4) {$qh$};
  \filldraw[green!20, opacity=0.5] (axis cs:0, 0) rectangle (axis cs:15,2);
   \addplot[
    only marks,
    mark=*,
    mark options={color=vividpurple, fill=vividpurple, scale=.5},
  ] coordinates {(16,1)};
  \node[above right, font=\normalsize] at (axis cs:16,1) {$P'_2$};
\foreach \i in {2,...,4} {
    \foreach \j in {-1}{
        \addplot[only marks, mark=*, mark options={color=red, fill=red, scale=0.5}] coordinates {(\i, \j)};        
    }
}
\foreach \i in {4} {
    \foreach \j in {-2}{
        \addplot[only marks, mark=*, mark options={color=red, fill=red, scale=0.5}] coordinates {(\i, \j)};        
    }
}
\foreach \i in {2,...,4} {
    \foreach \j in {-1}{
        \addplot[only marks, mark=*, mark options={color=red, fill=red, scale=0.5}] coordinates {(\i, \j)};        
    }
}
\foreach \i in {1,...,3} {
    \foreach \j in {1}{
        \addplot[only marks, mark=*, mark options={color=blue, fill=blue, scale=0.5}] coordinates {(\i, \j)};        
    }
}
\foreach \i in {1} {
    \foreach \j in {2}{
        \addplot[only marks, mark=*, mark options={color=blue, fill=blue, scale=0.5}] coordinates {(\i, \j)};        
    }
}

\draw[->, thin] (axis cs:4,-1) -- (axis cs:1,9);
\end{axis}
\end{tikzpicture}
\caption{$h = 5\equiv 2, a = 11 \equiv 2 \pmod{l=3}$}
\end{minipage}%
\end{center}
  \end{figure}

\subsubsection{$h \not\equiv 0 \pmod{l}$, \rm{gcd}$(l,h) > 1$}\label{subsubsec: h=1+,gcd(l,h) > 1}
We now consider the case where $d=$\rm{gcd}$(l,h) > 1$. We observe that the process of finding $g'(a,b,c)$ is similar to the section before. The rectangular regions where there are always solutions still holds; however, if we simplify the equation $lm_0 + hn_0 = ah$ into $\frac{l}{d}m_0 + \frac{h}{d}n_0 = \frac{ah}{d} - \frac{f_{[l \ h]}(m_0,n_0)}{d}$, we're able to obtain some $P'$ points within each triangular region since $g(l,h) = d*g(l/d,h/d)$ for $d > 0$. An example of these $P'$ points within the $Q'$ region is illustrated below. 
\begin{figure}[H]
\begin{center}
\color{black}
    \begin{tikzpicture}[scale=0.7]
    \begin{axis}[
    axis lines=center,
    xmin=0,
    xmax=27,
    ymin=-2,
    ymax=16,
    xticklabel style={above},
    xtick={0,1, ...,21,22,23,24,25,26,27},
    ytick={-2,-1,0,1,...,15,16},
    grid=both,
    grid style={line width=0.2pt, draw=gray!30},
    major grid style={line width=0.6pt,draw=gray!60},
    ticklabel style={font=\tiny},
    enlargelimits={abs=0.5},
    axis line style={latex-latex},
  ]
  \foreach \i in {2} {
    \foreach \j in {-1}{
        \addplot[only marks, mark=*, mark options={color=red, fill=red, scale=0.5}] coordinates {(\i, \j)};        
    }
}
\foreach \i in {1} {
    \foreach \j in {1}{
        \addplot[only marks, mark=*, mark options={color=blue, fill=blue, scale=0.5}] coordinates {(\i, \j)};        
    }
}
  \addplot[
    domain=0:27,
    samples=100,
    color=tan,
    thick,
  ]{(16-2/3*x)};
  \addplot[
    domain=0:27,
    samples=100,
    color=violet,
    thick,
  ]{(47/3-2/3*x)};
  \addplot[
    domain=0:27,
    samples=100,
    color=vividpurple,
    thick,
  ]{(35/3-2/3*x)};
  \addplot[
    domain=0:3,
    samples=100,
    color=black,
    thick,
  ]{(2-2/3*x)};
  \addplot[
    domain=0:3,
    samples=100,
    color=black,
    thick,
  ]{(-2/3*x)};
  \addplot[
    domain=-10:10,
    samples=2,
    color=black,
    thick
  ] coordinates {(3,0) (3,-2)};
\addplot[
    domain=-10:10,
    samples=2,
    color=red,
    thick
  ] coordinates {(0,16) (0,14)};
  \addplot[
    domain=-10:10,
    samples=2,
    color=red,
    thick
  ] coordinates {(0,14) (3,14)};
  \addplot[
    domain=-10:10,
    samples=2,
    color=red,
    thick
  ] coordinates {(3,14) (3,12)};
  \addplot[
    domain=-10:10,
    samples=2,
    color=red,
    thick
  ] coordinates {(3,12) (6,12)};
  \addplot[
    domain=-10:10,
    samples=2,
    color=red,
    thick
  ] coordinates {(6,12) (6,10)};
  \addplot[
    domain=-10:10,
    samples=2,
    color=red,
    thick
  ] coordinates {(6,10) (9,10)};
  \addplot[
    domain=-10:10,
    samples=2,
    color=red,
    thick
  ] coordinates {(9,8) (9,10)};
  \addplot[
    domain=-10:10,
    samples=2,
    color=red,
    thick
  ] coordinates {(9,8) (12,8)};
  \addplot[
    domain=-10:10,
    samples=2,
    color=red,
    thick
  ] coordinates {(12,6) (12,8)};
  \addplot[
    domain=-10:10,
    samples=2,
    color=red,
    thick
  ] coordinates {(12,6) (15,6)};
  \addplot[
    domain=-10:10,
    samples=2,
    color=red,
    thick
  ] coordinates {(15,4) (15,6)};
  \addplot[
    domain=-10:10,
    samples=2,
    color=red,
    thick
  ] coordinates {(15,4) (18,4)};
  \addplot[
    domain=-10:10,
    samples=2,
    color=red,
    thick
  ] coordinates {(18,2) (18,4)};
  \addplot[
    domain=-10:10,
    samples=2,
    color=red,
    thick
  ] coordinates {(18,2) (21,2)};
  \addplot[
    domain=-10:10,
    samples=2,
    color=red,
    thick
  ] coordinates {(21,0) (21,2)};
  \addplot[
    domain=-10:10,
    samples=2,
    color=red,
    thick
  ] coordinates {(21,0) (24,0)};
  \addplot[
    domain=-10:10,
    samples=2,
    color=red,
    thick
  ] coordinates {(24,0) (24,-2)};
  \addplot[
    domain=-10:10,
    samples=2,
    color=red,
    thick
  ] coordinates {(27,-2) (24,-2)};
  \addplot[
    domain=-10:10,
    samples=2,
    color=red,
    thick
  ] coordinates {(0,16) (0,10)};
  \addplot[
    domain=-10:10,
    samples=2,
    color=red,
    thick
  ] coordinates {(9,10) (0,10)};
  \addplot[
    domain=-10:10,
    samples=2,
    color=red,
    thick
  ] coordinates {(9,10) (9,4)};
  \addplot[
    domain=-10:10,
    samples=2,
    color=red,
    thick
  ] coordinates {(18,4) (9,4)};
  \addplot[
    domain=-10:10,
    samples=2,
    color=red,
    thick
  ] coordinates {(18,4) (18,-2)};
  \addplot[
    domain=-10:10,
    samples=2,
    color=red,
    thick
  ] coordinates {(27,-2) (18,-2)};
   \addplot[
    only marks,
    mark=*,
    mark options={color=vividpurple, fill=vividpurple, scale=.5},
  ] coordinates {(1,11)};
  \node[above right, font=\normalsize] at (axis cs:1,11) {$Q'_{0}$};
   \addplot[
    only marks,
    mark=*,
    mark options={color=vividpurple, fill=vividpurple, scale=.5},
  ] coordinates {(10,5)};
  \node[above right, font=\normalsize] at (axis cs:10,5) {$Q'_{1}$};
   \addplot[
    only marks,
    mark=*,
    mark options={color=vividpurple, fill=vividpurple, scale=.5},
  ] coordinates {(19,1)};
  \node[above right, font=\normalsize] at (axis cs:19,1) {$Q'_{2}$};
  \filldraw[blue!20, opacity=0.5] (axis cs:0, 4) rectangle (axis cs:9,10);
  \draw[->, thick] (axis cs:9,14) -- (axis cs:9,10);
  \node[above, font=\normalsize] at (axis cs:9,14) {$hq-h$};
  \filldraw[green!20, opacity=0.5] (axis cs:0, 0) rectangle (axis cs:18,4);
  \draw[->, thick] (axis cs:18,8) -- (axis cs:18,4);
  \node[above, font=\normalsize] at (axis cs:18,8) {$hq$};
  \addplot[
    only marks,
    mark=*,
    mark options={color=violet, fill=violet, scale=.5},
  ] coordinates {(1,15)};
  \node[above right, font=\small] at (axis cs:1,15) {$P'_{-6}$};
  \addplot[
    only marks,
    mark=*,
    mark options={color=violet, fill=violet, scale=.5},
  ] coordinates {(4,13)};
  \node[above right, font=\small] at (axis cs:4,13) {$P'_{-5}$};
  \addplot[
    only marks,
    mark=*,
    mark options={color=violet, fill=violet, scale=.5},
  ] coordinates {(7,11)};
  \node[above right, font=\small] at (axis cs:7,11) {$P'_{-4}$};
  \addplot[
    only marks,
    mark=*,
    mark options={color=violet, fill=violet, scale=.5},
  ] coordinates {(10,9)};
  \node[above right, font=\small] at (axis cs:10,9) {$P'_{-3}$};
  \addplot[
    only marks,
    mark=*,
    mark options={color=violet, fill=violet, scale=.5},
  ] coordinates {(13,7)};
  \node[above right, font=\small] at (axis cs:13,7) {$P'_{-2}$};
  \addplot[
    only marks,
    mark=*,
    mark options={color=violet, fill=violet, scale=.5},
  ] coordinates {(16,5)};
  \node[above right, font=\small] at (axis cs:16,5) {$P'_{-1}$};
  \addplot[
    only marks,
    mark=*,
    mark options={color=violet, fill=violet, scale=.5},
  ] coordinates {(19,3)};
  \node[above right, font=\small] at (axis cs:19,3) {$P'_{0}$};
  \addplot[
    only marks,
    mark=*,
    mark options={color=violet, fill=violet, scale=.5},
  ] coordinates {(22,1)};
  \node[above right, font=\small] at (axis cs:22,1) {$P'_{1}$};
  \draw[->, thin] (axis cs:2,-1) -- (axis cs:1,15);
  \end{axis}
  \end{tikzpicture}
    \caption{$f_{[l \ h]}(m_0,n_0)=ah$ for $h=9$, $l=6$, $a=16$}
\label{subfig:Sub_Case 1.4 - 6,19,26}
\end{center}
\end{figure}
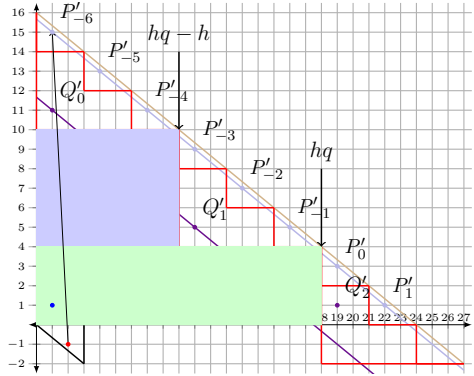
The $P'$ points within the $Q'$ triangle region, if exist, will always be above or to the right of a more optimal $Q'$ point. And thus, similar to previous discussion, these points can be neglected as possible candidates for $g'(a,b,c)$. So, when $h \not\equiv 0, g(l,h) > 1$, the possible candidates for $g'(a,b,c)$ and conditions are the same for $a \equiv 0, 1 \pmod{l}, h \equiv 0 \pmod{l}$ and $a \not\equiv 0, 1 \pmod{l}, h \equiv 0 \pmod{l}$. 
\subsubsection{Summary of $h \not\equiv 0 \pmod{l}$}
In both Section \ref{subsubsec: h = 1+, gcd(l,h) = 1} and Section \ref{subsubsec: h=1+,gcd(l,h) > 1}, note that the possible candidates for $g(a,b,c)$ are in the same locations as Section \ref{subsec: h=0}, and therefore, their value comparisons are also the same as Section \ref{subsec: value comparisons for h=0}.
\section{Final Statement}
Combining all the above discussion, we arrive at
\theorem{For relatively prime integers ${a,b,c}$ where $a=ql+r$ and $b$ are relatively prime and $c = -ha + lb$, the Frobenius number is given by the following:}\label{thm:Final}
\begin{enumerate}
  \item $a \equiv 0 \pmod{l}$
  \begin{equation}
  g(a,b,c) = ab-a((q-1)h+1)-b
  \end{equation}
    \item $a \equiv 1 \pmod{l}$
    \begin{enumerate}
    \item
    \begin{equation}
    g(a,b,c) = ab-a((q-1)h+1)-b(r+1)
    \quad \text{when } b < ah
    \end{equation}
    \item 
    \begin{equation}
    g(a,b,c) = ab-a(\lfloor \frac{ah-h}{l} \rfloor + 1)-b \quad \text{when } b > ah
    \end{equation}
    \end{enumerate}
    \item $a \not\equiv 0,1 \pmod{l}$
    \begin{enumerate}
    \item 
    \begin{equation} 
    g(a,b,c) = ab-a(qh + 1)-b \quad \text{when } b > \frac{ah}{r}
    \end{equation}
    \item
    \begin{equation}
    g(a,b,c) = ab-a((q-1)h+1)-b(r+1) \quad \text{when } b < \frac{ah}{r}
    \end{equation}
    \end{enumerate}
\end{enumerate} 
\begin{proof}
\begin{enumerate}
  \item $a \equiv 0 \pmod{l}$ \begin{align*}
    g(a,b,c) &= ab-g'(a,b,c) \\ &= ab-f_{[a \ b]}(Q'_1) \\ &= ab-f_{[a \ b]}((q-1)h+1,1) \\ &= ab-a((q-1)h+1)-b
    \end{align*}
  \item $a \equiv 1 \pmod{l}$
  \begin{enumerate}
    \item \begin{align*}
    g(a,b,c) &= ab-g'(a,b,c)\\ &= ab-f_{[a \ b]}(Q'_1)\\ &= ab-f_{[a \ b]}((q-1)h+1,r+1)\\ &= ab-a((q-1)h+1)-b(r+1) \quad \text{when } b < ah
    \end{align*}
    \item \begin{align*}
    g(a,b,c) &= ab-g'(a,b,c)\\ &= ab-f_{[a \ b]}(F')\\ &= ab-f_{[a \ b]}(\lfloor \frac{ah-h}{l} \rfloor + 1, 1)\\ &=  ab-a(\lfloor \frac{ah-h}{l} \rfloor + 1)-b(1) \quad \text{when } b > ah
    \end{align*}
  \end{enumerate}
  \item $a \not\equiv 0,1 \pmod{l}$
  \begin{enumerate}
    \item \begin{align*}
    g(a,b,c) &= ab-g'(a,b,c)\\ &= ab-f_{[a \ b]}(Q'_2)\\ &= ab-f_{[a \ b]}(qh + 1, 1)\\ &=  ab-a(qh + 1)-b(1) \quad \text{when } b > \frac{ah}{r}
    \end{align*}
    \item \begin{align*}
    g(a,b,c) &= ab-g'(a,b,c)\\ &= ab-f_{[a \ b]}(Q'_1)\\ &= ab-f_{[a \ b]}((q-1)h+1,r+1)\\ &= ab-a((q-1)h+1)-b(r+1) \quad \text{when } b < \frac{ah}{r}
    \end{align*}
  \end{enumerate}
\end{enumerate}
\end{proof}
\theorem{As an application of Theorem \ref{thm:Final}, we recover \rm{\cite{selmer1977}} explicit formula: $g(a,ha+d,ha+2d) = ab - a(K+1) - b$ where $K = h\lfloor \frac{a-1}{2} \rfloor$} and $h \geq 1, d > 0$.
\begin{proof}
Let $a=a$, $b = ha+d$, and $c =ha+2d$. Then, the lattice point $(-h,2)$ has a corresponding value of $-ha + 2(ha+d) = ha+2d = c$; therefore, $l = 2$. Since $l = 2$, $a$ is either $a \equiv 0 \pmod{l}$ or $a \equiv 1 \pmod{l}$. Note that $ b = ha+d > ah$. Therefore, $a \equiv 0 \pmod{l}$ is Case 1 in Theorem \ref{thm:Final} and $a \equiv 1 \pmod{l}$ is Case 2b in Theorem \ref{thm:Final}. Obviously, for $a \equiv 1 \pmod{l}$ for $l=2$, Case 2b is Selmer's explicit formula $g(a,ha+d,ha+2d) = ab-a(\lfloor \frac{ah-h}{l} \rfloor + 1)-b = ab-a(h\lfloor \frac{a-1}{2} \rfloor + 1)-b=ab-a(K + 1)-b$. When $a \equiv 0 \pmod{l}$ and $l=2$, $a$ is even. From Case 1, we have $g(a,b,c) = ab-a((q-1)h+1)-b = ab-a(h(\frac{a}{2}-1)+1)-b = ab-a(h\lfloor \frac{a-1}{2} \rfloor + 1) -b = ab-a(K+1)-b$. Thus, we have covered both cases, and we recover Selmer's explicit formula.
\end{proof}
\section{Discussion and Future Work}
\rm{We have developed a novel approach to solving the Frobenius problem in three variables, denoted as ${a, b, c}$, where $a$ and $b$ are relatively prime, and $c$ takes the form $-ha + lb$. This method involves the creation of a Cartesian plane, where we assign corresponding values to a set of lattice points, denoted as $D$. Lattice points located within the region $D^-$ and positioned above the line $ax + by = b$ are termed exceptional values. They result in the condition $g(a, b, c) < g(a, b)$, where $g(a, b, c)$ is in the form $ab - ax - by$ for $x, y > 0$. \par With this setup, we first explore the intriguing properties it offers before discovering two key equations (Equation \ref{eq:key_general} and Equation \ref{eq:key_general_sub}). These equations help us classify Frobenius number candidates into three categories: $F$, $P$, and $Q$ points. Once these points are identified, we observe that their corresponding values are highly dependent on the slope $\frac{a}{b}$, leading to value comparisons. The culmination of these ideas leads to the development of Theorem \ref{thm:Final}. \par The final result is straightforward compared to established works, such as \cite{tripathi2017}. An advantage of our method is its streamlined approach, with the classifications of $F$, $P$, and $Q$ points extending to cases with more than three variables, for which no existing formulas currently exist. Presently, the most efficient and well-known algorithm for calculating the Frobenius number in three variables is \cite{Rödseth1978}, which relies on continued fractions and shares the same computational complexity as the Euclidean algorithm. The only algorithmic aspect of our formula involves determining values for $c = -ha + lb$, specifically the values of $h$ and $l$. However, Theorem \ref{thm:Final} allows for easy manual computation of $g(a, b, c)$, unlike Rödseth's method.\par With regards to future work, we plan to investigate formulae for the Frobenius number in $n>3$ variables. We have outlined other potential ideas for future work; however, they're a work in progress.}
\par \vspace{12pt}
\noindent\textbf{Acknowledgements}: I would like to thank my mentor Dr. Lin Tan at West Chester University of Pennsylvania for his constant encouragement and advice, in particular for suggesting to look into the line $CZ$.
\newpage{\addcontentsline{toc}{section}{References} \bibliographystyle{plainnat}
\bibliography{references}}
\end{document}